\newtheorem{thm}{Theorem}[section]
\newtheorem{cor}[thm]{Corollary}
\newtheorem{lem}[thm]{Lemma}
\theoremstyle{definition}
\newtheorem{defn}[thm]{Definition}
\newtheorem{conj}{Conjecture}
\theoremstyle{remark}
\newtheorem{rem}[thm]{Remark}
\numberwithin{equation}{section}
\newcommand{\Z}{\mathbb Z}
\newcommand{\R}{\mathbb R}
\newcommand{\N}{\mathbb N}
\newcommand{\Pro}{\mathbb P}
\newcommand{\gr}{\mathrm{gr}}
\font \rus= wncyr10
\newcommand{\sha}{\, \hbox{\rus x} \,}
\newcommand{\Ho}{\mathcal{H}}
\newcommand{\Homt}{\mathcal{H^{MT_+}}}
\newcommand{\MT}{\mathcal{MT}}
\newcommand{\GMT}{\mathcal{G_{MT}}}
\newcommand{\G}{\mathbb{G}}
\newcommand{\GU}{\mathcal{G}_{\mathcal{U}}}
\newcommand{\GP}{\mathcal{G_{MT'}}}
\newcommand{\GPU}{\mathcal{G_{\mathcal{U'}}}}
\newcommand{\Ao}{\mathcal{A}}
\newcommand{\tp}{{}_0 1_{1}}
\newcommand{\pio}{{}_0 \Pi_{1}}
\newcommand{\dch}{dch}
\newcommand{\Jmt}{J^{\MT} }
\newcommand{\ZZ}{\mathcal{Y}}
\newcommand{\Io}{\mathbf{I0}}
\newcommand{\Ii}{\mathbf{I1}}
\newcommand{\Iii}{\mathbf{I2}}
\newcommand{\Iiii}{\mathbf{I3}}
\newcommand{\zetam}{\zeta^{ \mathfrak{m}}}
\newcommand{\Imot}{I^{\mathfrak{m}}}
\newcommand{\Q}{\mathbb Q}
\newcommand{\Lo}{\mathcal{L}}
\newcommand{\U}{\mathcal{U}}
\newcommand{\To}{\longrightarrow}
\newcommand{\A}{\mathbb{A}}
\newcommand{\Amt}{\mathcal{A}^{\MT}}
\newcommand{\tdots}{.\, .\,}
\newcommand{\oo}{\mathbf{00}}
\newcommand{\os}{\mathbf{0}}
\def\0skip{\vskip 2pt}
\def\1skip{\vskip 4pt}
\def\2skip{\vskip 10pt}
\def\3skip{\vskip 0.15in}
\def\4skip{\vskip 0.20in}
\def\5skip{\vskip 0.25in}
\def\6skip{\vskip 0.30in}
\begin{document}
\author{Francis Brown}
\begin{title}[Mixed Tate motives over $\Z$]{Mixed Tate motives over $\Z$}\end{title}
\maketitle
\begin{abstract} We prove that the category of mixed Tate motives over $\Z$ is spanned by the motivic fundamental group of $\Pro^1$ minus three points.
We  prove a conjecture by M. Hoffman which states
 that every multiple zeta value is a $\Q$-linear combination of $\zeta(n_1,\ldots, n_r)$ where $n_i\in \{2,3\}$.
\end{abstract}
\section{Introduction}
Let $\MT(\Z)$ denote the category of mixed Tate motives unramified over $\Z$.
It is a Tannakian category with Galois group $\GMT$. Let
 $\MT'(\Z)$ denote the  full  Tannakian subcategory generated by   the  motivic fundamental group of $\Pro^1\backslash \{0,1,\infty\}$, and denote its
 Galois group by $\GP$.   The following conjecture is  well-known: 

\begin{conj} \label{conj1} The map $\GMT\rightarrow \!\!\!\!\!\rightarrow  \GP$ is an isomorphism.
\end{conj}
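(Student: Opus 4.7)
The plan is to prove the motivic version of Hoffman's conjecture, namely that every motivic multiple zeta value is a $\Q$-linear combination of motivic MZVs $\zetam(n_1,\ldots,n_r)$ with $n_i\in\{2,3\}$. Combined with the Deligne--Goncharov upper bound $\dim \mathcal{O}(\GP)_N \leq d_N$, where $d_N$ is defined by the Zagier recursion $d_N=d_{N-2}+d_{N-3}$, and with the well-known lower bound $\dim \mathcal{O}(\GMT)_N \geq d_N$, a spanning statement for Hoffman elements forces $\GMT\twoheadrightarrow\GP$ to be an isomorphism, since there are exactly $d_N$ Hoffman sequences of weight $N$. In particular, the period map then transports the motivic conclusion to the arithmetic statement of Hoffman, all in one package.

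The first step is to set up the Hopf algebra $\Ho$ of motivic MZVs with Goncharov's motivic coaction $\Delta:\Ho\to\Ae\otimes\Ho$, where $\Ae=\Ho/\zetam(2)\Ho$ is a graded Lie coalgebra concentrated in odd degrees $\geq 3$. The motivic Galois group acts on $\Ho$ by derivations $\partial_{2n+1}$ for $n\geq 1$, and conjectural freeness of the Lie algebra reduces the spanning question to a non-degeneracy problem: for each weight $N$, the matrix whose rows are the images of $\partial_{2n+1}\zetam(\s)$ with $\s$ ranging over Hoffman sequences of weight $N$ should have maximal rank, by induction on depth and weight. More precisely, a Hoffman element in weight $N$ will be exhibited as a distinguished basis element once a single non-vanishing minor of the appropriate derivation matrix is produced, since lower-weight contributions are controlled by the inductive hypothesis.

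I would carry out the coaction computation by applying Goncharov's subword formula to the iterated integral representation $\Imot(\bO;\s;\bI)$ of a Hoffman MZV, isolating those subwords whose image in $\Ae$ is a generator $\zetam(2n+1)$. To put this into a usable closed form I would invoke Zagier's theorem (in a motivic lift) giving an explicit rational evaluation of $\zeta(2,\ldots,2,3,2,\ldots,2)$ in terms of $\zeta(2k+1)\,\pi^{2\ell}$; this yields a formula for the relevant matrix entries as signed sums of binomial coefficients. The hard part, and the true heart of the proof, will be verifying non-degeneracy of these explicit combinatorial expressions in every weight. I would attack this by reduction modulo $2$: the binomial coefficients involved are well-behaved along Pascal's triangle mod $2$, so one can exhibit a distinguished non-vanishing minor at the level of $\F_2$-coefficients. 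Given this mod-$2$ non-vanishing, the induction closes and Conjecture \ref{conj1} follows.
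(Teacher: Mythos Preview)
Your technical plan in the second and third paragraphs is essentially the paper's approach: compute the infinitesimal coaction via Goncharov's formula, lift Zagier's evaluation of $\zeta(2^{\{a\}}32^{\{b\}})$ to motivic MZVs to obtain explicit matrix entries, and establish invertibility of the resulting matrices by a $2$-adic argument on the binomial coefficients. The paper organizes the induction by the \emph{level} filtration (the number of $3$'s in a Hoffman word), which for fixed weight is equivalent to your induction on depth.

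However, the logical framing in your first paragraph is reversed. Proving that Hoffman elements \emph{span} $\Ho$ yields only $\dim\Ho_N\leq d_N$, which is already immediate from the inclusion $\Ho\hookrightarrow\Homt$ and the known structure of $\GMT$. What Conjecture~\ref{conj1} requires is the opposite inequality $\dim\Ho_N\geq d_N$, and for that one must show the Hoffman elements are \emph{linearly independent} in $\Ho$. Fortunately, the non-degeneracy argument you outline is precisely a linear-independence proof: invertibility of the derivation matrices $M_{N,\ell}$ propagates independence up the level filtration. So your actual strategy is correct; only the summary is misstated. Once linear independence is established, the chain $\Ho^{2,3}\subseteq\Ho\subseteq\Homt$ collapses by dimension count, giving Conjecture~\ref{conj1}; the spanning statement (and hence Hoffman's conjecture for real MZVs via the period map) is then a \emph{consequence}, not an input.

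A minor imprecision: $\Ae=\Ho/\zetam(2)\Ho$ is a graded Hopf algebra, not a Lie coalgebra, and it is not concentrated in odd degrees; rather, its Lie coalgebra of indecomposables $\Lo$ is cogenerated in odd degrees $\geq 3$.
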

Some consequences of this  conjecture are   explained  in   \cite{Andre}, $\S25.5-7$.
In particular, it implies a conjecture due to Deligne and Ihara on the  outer action of $\mathrm{Gal}(\overline{\Q}/\Q)$ on the pro-$\ell$ fundamental group of $\Pro^1\backslash \{0,1,\infty\}$.  
Another  consequence is that the periods of $\MT(\Z)$ are $\Q[{1\over 2 \pi i }]$-linear combinations of 
multiple zeta values
\begin{equation}  \label{introzetadef}
\zeta(n_1,\ldots, n_r) = \sum_{0< k_1 < \ldots < k_r}  {1\over k_1^{n_1}\ldots k_r^{n_r}}  \quad \hbox{ where }  n_i\geq 1\ ,\  n_r\geq 2 \ .
\end{equation}

On the other hand, M. Hoffman proposed a  conjectural basis for the $\Q$-vector space spanned by multiple zeta values in \cite{Hoffman}.  The algebraic part of this conjecture is
\begin{conj} \label{conj2} Every multiple zeta value $(\ref{introzetadef})$ is a $\Q$-linear combination of 
\begin{equation}\label{Hbasis} \{\zeta(n_1,\ldots, n_r): \hbox{ where } n_1,\ldots, n_r \in \{2,3\}\} \ .
\end{equation}
\end{conj}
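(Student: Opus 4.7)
The plan is to lift the statement to motivic multiple zeta values, where the Hopf-algebraic machinery of motivic Galois theory is available, and to prove the corresponding motivic spanning statement. Let $\Ho$ denote the graded Hopf algebra of motivic periods of $\MT(\Z)$, and let $\zetam(n_1,\ldots,n_r) \in \Ho$ denote the motivic multiple zeta values of Goncharov--Deligne. A period homomorphism $\Ho \to \R$ sends $\zetam(n_1,\ldots,n_r)$ to $\zeta(n_1,\ldots,n_r)$, so it suffices to prove that the motivic Hoffman elements, namely those $\zetam(n_1,\ldots,n_r)$ with all $n_i \in \{2,3\}$, $\Q$-linearly span $\Ho$.

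Let $H_N \subset \Ho_N$ be the $\Q$-span of the motivic Hoffman elements of weight $N$, and let $d_N$ be the integer defined by the recursion $d_N = d_{N-2} + d_{N-3}$ with $d_0=1,\,d_1=0,\,d_2=1$. Deligne and Goncharov establish the upper bound $\dim_\Q \Ho_N \leq d_N$, while a direct count shows that the number of Hoffman indices of weight $N$ equals $d_N$ as well. It therefore suffices to prove $\Q$-linear independence of the motivic Hoffman elements in each weight, for then $\dim_\Q H_N \geq d_N \geq \dim_\Q \Ho_N$, forcing $H_N = \Ho_N$.

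To establish linear independence, I would exploit Goncharov's motivic coaction $\Delta : \Ho \to \Ao \otimes \Ho$, where $\Ao = \Ho / \zetam(2)\Ho$. Projecting onto the depth-one part of $\Ao$ in weight $2k+1$ produces derivations $\partial_{2k+1} : \Ho \to \Ho$, and a crucial preliminary step is to show that in each weight $N$ the joint kernel $\bigcap_{k\geq 1}\ker\partial_{2k+1}$ is exactly $\Q\,\zetam(2)^{N/2}$. Next, I would introduce the level of a Hoffman index $(n_1,\ldots,n_r)$ as the number of $3$'s it contains and argue by induction on level. Since each $\partial_{2k+1}$ can only decrease the level by one, independence at level $\ell$ reduces, modulo the inductive hypothesis at lower levels, to the non-degeneracy of a structured matrix whose entries arise from the Goncharov coproduct applied to Hoffman words.

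The main obstacle, and the genuine arithmetic input, is showing that this matrix is non-singular. Its entries are explicit combinations of binomial coefficients such as $\binom{2k+1}{2i}$ weighted by lower-level motivic Hoffman elements, and I anticipate that the determinant does not obviously vanish over $\Q$ but is most cleanly controlled after a $p$-adic reduction (plausibly $2$-adic), in the spirit of Zagier's evaluation of $\zeta(\underbrace{2,\ldots,2}_{a},3,\underbrace{2,\ldots,2}_{b})$. Once this non-vanishing is secured, the induction closes, proving linear independence of the motivic Hoffman elements; hence $H=\Ho$, and Conjecture~\ref{conj2} follows on applying the period map.
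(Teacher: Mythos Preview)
Your proposal is correct and follows essentially the same route as the paper: lift to motivic multiple zeta values, compare the count of Hoffman words with the known dimension $d_N$ of $\Homt_N$, reduce to linear independence of the motivic Hoffman elements, and prove that by induction on the level (number of $3$'s) using the derivations coming from Goncharov's coaction; the invertibility of the resulting transition matrices is indeed secured $2$-adically, with Zagier's evaluation of $\zeta(2^{\{a\}}32^{\{b\}})$ (lifted to a motivic identity) supplying the explicit formula for the matrix entries. One small correction: the joint kernel of the $D_{2r+1}$ in weight $N$ is $\Q\,\zetam(N)$, not $\Q\,\zetam(2)^{N/2}$ (the latter is only the even-weight case, via Euler's relation $\zetam(2n)=b_n\zetam(2)^n$); this does not affect your argument.
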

In this paper we prove conjectures \ref{conj1} and \ref{conj2} using  motivic multiple zeta values. These are  elements in 
a certain graded comodule $\Homt$ over the affine ring   of functions on the prounipotent part of $\GMT$,  and are graded versions of the motivic iterated integrals
defined in \cite{GG}. We  denote each motivic multiple zeta  value by a symbol 
\begin{equation}\label{introzmot}
\zetam(n_1,\ldots, n_r) \quad  \hbox{ where }   n_i \geq 1 \ , \ n_r \geq 2 \ ,
\end{equation}
and its period  is the  multiple zeta value $(\ref{introzetadef})$.
 Note that in our setting $\zetam(2)$ is not zero, by contrast with \cite{GG}.
Our main result is the following:
\begin{thm} \label{intromaintheorem}  The set of elements 
\begin{equation} \label{intromotHbasis} \{\zetam(n_1,\ldots, n_r)\ , \hbox{ where  } n_i\in \{2,3\}\}\ ,
\end{equation} 
 are a basis of the  $\Q$-vector space  of  motivic multiple zeta values.\end{thm}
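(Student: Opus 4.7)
The plan is to combine a known upper bound on $\dim_\Q \Homt_N$ with a spanning statement about Hoffman elements, forcing the two to coincide. Let $d_N$ be defined by $d_N = d_{N-2} + d_{N-3}$ with $d_0=1$, $d_1=0$, $d_2=1$. On one hand, the structure theorem of Deligne--Goncharov for $\MT(\Z)$, together with the fact that the Lie algebra of the prounipotent part of $\GMT$ is free on odd generators in each degree $\geq 3$, gives the upper bound $\dim_\Q \Homt_N \leq d_N$. On the other hand, splitting a Hoffman word $(n_1,\ldots,n_r)\in\{2,3\}^r$ of weight $N$ according to its first letter yields a bijection between weight-$N$ Hoffman elements and the disjoint union of weight-$(N-2)$ and weight-$(N-3)$ Hoffman elements, so the number of Hoffman elements of weight $N$ satisfies the same recursion with the same initial values, and equals $d_N$. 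It therefore suffices to prove that the Hoffman elements \emph{span} $\Homt_N$ in every weight, for the dimension count then forces linear independence.

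To prove spanning, I would induct on weight using the motivic coaction. For each odd $r\geq 1$ it induces an infinitesimal derivation
\[
\partial_{2r+1}\colon \Homt_N \To \Lomt_{2r+1}\otimes \Homt_{N-2r-1},
\]
where $\Lomt$ is the Lie coalgebra associated to $\GU$. An element of $\Homt_N$ vanishes exactly when it is killed by every $\partial_{2r+1}$ with $3\leq 2r+1<N$ and has trivial projection onto $\Q\cdot\zetam(2)^{N/2}$ (the latter case handled by the nontriviality $\zetam(2)\neq 0$ noted in the introduction). So to conclude that the Hoffman subspace $\Homt^{2,3}_N$ coincides with $\Homt_N$ it suffices to verify (i) that $\Homt^{2,3}$ is preserved by every $\partial_{2r+1}$, so the derivation descends to the quotient, and (ii) that the induced map on the quotient is injective. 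Both points will follow from an explicit computation of $\partial_{2r+1}$ on Hoffman elements, via Goncharov's coaction formula applied to their motivic iterated integral representations $\Imot(0;w;1)$, with $\Lomt_{2r+1}$ identified as a quotient spanned by classes of elements of the form $\zetam(2,\ldots,2,3,2,\ldots,2)$. Organizing the output by the filtration by ``level'' (number of $3$'s that appear), the goal is to show that the resulting transition matrix is upper-triangular, with nonzero and explicitly computable diagonal entries.

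The main obstacle will be proving non-vanishing of these diagonal coefficients. They can be written as alternating binomial sums weighted by powers of $2$, and their non-vanishing is essentially an arithmetic, Zagier-type statement, akin to the identity underpinning the evaluation of $\zeta(2,\ldots,2,3,2,\ldots,2)$ in terms of $\zeta(2n+1)$. I expect the cleanest route to be a $2$-adic valuation estimate on the relevant binomial coefficients, which rules out cancellation. Once the upper-triangular structure with nonvanishing diagonal is in place, a single induction on $(N,\text{level})$ shows that the Hoffman elements span $\Homt_N$, and combined with the Deligne--Goncharov upper bound $\dim_\Q \Homt_N\leq d_N$ this yields the theorem.
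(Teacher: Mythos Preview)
Your computational program---the level filtration on $\Ho^{2,3}$, the derivations $D_{2r+1}$ via Goncharov's coaction, upper-triangularity of the level-graded transition matrices, and a $2$-adic argument for the non-vanishing of the diagonal (resting on a Zagier-type evaluation of $\zeta(2^{\{a\}}32^{\{b\}})$)---is precisely the paper's route. However, your logical packaging is inverted, and step (ii) as written does not work.

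You aim to prove \emph{spanning} by showing the derivations descend to the quotient $Q=\Homt/\Ho^{2,3}$ and are injective there. But computing $D_{2r+1}$ on Hoffman elements gives information about the subspace $\Ho^{2,3}$, not about $Q$; once you assume inductively that $Q_M=0$ for $M<N$, the target of the induced map on $Q_N$ is zero, so ``injectivity on $Q_N$'' is exactly the statement $Q_N=0$ you are trying to prove, with no independent handle from the Hoffman side. What the invertible matrices actually establish is \emph{linear independence} of the Hoffman elements, by induction on level: a nontrivial relation among the $\zetam(w)$ of level $\ell$ would map under $\partial_{N,\ell}$ to a nontrivial relation of level $\ell-1$. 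This is exactly how the paper argues. Since $\dim_\Q\Homt_N=d_N$ is an \emph{equality} (not merely an upper bound---it follows from the known structure of $\GU$ and the definition $\Homt=\Amt\otimes_\Q\Q[f_2]$), independence of $d_N$ elements forces spanning, and the inclusions $\Ho^{2,3}\subseteq\Ho\subseteq\Homt$ collapse. Two smaller corrections: the kernel of $D_{<N}$ in weight $N$ is $\Q\,\zetam(N)$, which for odd $N$ is not $\Q\,\zetam(2)^{N/2}$; and the matrices are upper-triangular only after reduction modulo an ideal of $2$-adically small terms, so invertibility genuinely requires the $2$-adic lemma you allude to, fed by the motivic lift of Zagier's formula.
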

Since the dimension of the basis $(\ref{intromotHbasis})$ coincides with the known dimension for  $\Homt$ in each  degree,
 this  yields  conjecture \ref{conj1}. Conjecture \ref{conj2} follows  from theorem \ref{intromaintheorem} by applying the period map. Both conjectures together imply the following

\begin{cor} The periods of every mixed Tate motive over $\Z$ are $\Q[{1\over 2 \pi i}]$-linear combinations of 
$\zeta(n_1,\ldots, n_r)$,  where $n_1,\ldots, n_r\in \{2,3\}$.
\end{cor}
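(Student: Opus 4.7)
The Corollary is a formal consequence of the two principal results proved in the paper: Conjectures~\ref{conj1} and \ref{conj2}. The strategy is to chain them together via the period map. First I would use Theorem~\ref{intromaintheorem} to establish Conjecture~\ref{conj1}, which states that the surjection $\GMT\twoheadrightarrow \GP$ is an isomorphism, equivalently $\MT(\Z)=\MT'(\Z)$. The generating series for the weight-graded dimensions of $\Homt$ is already known (by work of Deligne--Goncharov building on Borel's computation of the rational $K$-theory of $\Z$), and since the Hoffman-indexed elements (\ref{intromotHbasis}) have exactly that cardinality in each weight, the linear independence asserted by Theorem~\ref{intromaintheorem} forces their span to exhaust $\Homt$. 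This yields $\MT(\Z)=\MT'(\Z)$. At this point I would invoke the standard fact, already recorded in the introduction, that the periods of $\MT'(\Z)$ (i.e., of the motivic fundamental group of $\Pro^1\setminus\{0,1,\infty\}$ based at the relevant tangential basepoints) are realized as Kontsevich iterated integrals and therefore lie in $\Q[\tfrac{1}{2\pi i}]\cdot\{\text{MZVs}\}$.

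Second, I would apply the period map to Theorem~\ref{intromaintheorem} to deduce Conjecture~\ref{conj2}: every multiple zeta value $\zeta(n_1,\ldots,n_r)$ is a $\Q$-linear combination of $\zeta(m_1,\ldots,m_s)$ with $m_i\in\{2,3\}$. Substituting these expressions into the $\Q[\tfrac{1}{2\pi i}]$-linear combinations of arbitrary multiple zeta values produced in the previous step immediately re-expresses any period of a mixed Tate motive over $\Z$ as a $\Q[\tfrac{1}{2\pi i}]$-linear combination of Hoffman-indexed multiple zeta values, which is the statement of the Corollary.

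The only step that requires any care is the passage from the categorical equality $\MT(\Z)=\MT'(\Z)$ to the description of actual periods; this amounts to the (now classical) Deligne--Goncharov identification of periods of the motivic fundamental group of $\Pro^1\setminus\{0,1,\infty\}$ with $\Q[\tfrac{1}{2\pi i}]$-combinations of MZVs, and is exactly the input already cited in the introduction. Consequently there is no genuine obstacle: all the substantive work sits inside Theorem~\ref{intromaintheorem}, and the Corollary is a bookkeeping deduction from the two conjectures once they are in hand.
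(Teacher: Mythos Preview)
Your proposal is correct and matches the paper's own argument: the Corollary is stated immediately after the sentence ``Both conjectures together imply the following'' and is given no separate proof, precisely because it is the bookkeeping deduction you describe. The chain---Theorem~\ref{intromaintheorem} $\Rightarrow$ Conjecture~\ref{conj1} $\Rightarrow$ periods of $\MT(\Z)$ lie in $\Q[\tfrac{1}{2\pi i}]\cdot\{\text{MZVs}\}$, then Theorem~\ref{intromaintheorem} $\Rightarrow$ Conjecture~\ref{conj2} to rewrite each MZV in the Hoffman basis---is exactly what the paper intends.
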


\subsection{Outline} The structure of  the de Rham realization of  $\GMT$ is well-known: there is a split exact sequence 
$$1 \To \GU \To \GMT \To \G_m \To 1\ , $$
where $\GU$ is a prounipotent group whose   Lie algebra is free,  generated by  one element $\sigma_{2n+1}$ in  degree $-2n-1$, for all $n\geq 1$. 
Let   $\Amt$ 
denote its graded  affine ring over $\Q$. It  is a cofree commutative graded Hopf algebra cogenerated by one element  $f_{2n+1}$ in  degree $2n+1$, for  all $n\geq 1$.
Consider the free comodule over $\Amt$ defined by
$$\Homt= \Amt\otimes_{\Q} \Q[f_2]\ ,$$
 where $f_2$ is  in degree $2$, has trivial coaction,  and is  an artefact to keep track of  even Tate twists (since multiple zeta values are real numbers, we need not consider odd Tate twists).   In keeping with the usual terminology for multiple zeta values,  we refer to the grading on $\Homt$ as the weight, which is one half the motivic weight.
  The motivic multiple zeta values  $(\ref{introzmot})$ we shall use   are    elements of $\Homt$ defined  by     functions on a certain subscheme of  the motivic torsor of paths of $\Pro^1\backslash \{0,1,\infty\}$ from $0$ to $1$, and depend on some choices.
  They have a canonical period given by a  coefficient in Drinfeld's associator, and the element $\zetam(2)$, which is non-zero in our setting, corresponds to $f_2$. 
  
  Let $\Ho\subseteq \Homt$ denote the subspace   spanned by the motivic multiple zetas. By Ihara,
the action  of $\GU$  on the motivic torsor of paths is determined by its action on the trivial de Rham path from $0$ to $1$. The dual coaction
 \begin{equation} \label{introcoaction}
  \Delta:\Ho\To \Amt \otimes_{\Q} \Ho
  \end{equation}  can be  determined   by a  formula due to   Goncharov  \cite{GG}.
  
  Let $\Ho^{2,3}\subseteq \Ho$  be the  vector subspace  spanned by the elements $(\ref{intromotHbasis})$. We   define an increasing filtration $F_{\bullet} $ on $\Ho^{2,3}$, called the level, by the number of arguments $n_i$  in   $\zetam(n_1,\ldots, n_m)$  which are equal to $3$.  We show that $\Ho^{2,3}$ and the $F_{\ell} \Ho^{2,3}$ are stable under the action of $\GU$, and that $\GU$ acts trivially on the $\gr^F_{\ell}(\Ho^{2,3})$. As a consequence, the action of $\GU$ on $F_{\ell} \Ho^{2,3} / F_{\ell-2} \Ho^{2,3}$ factors through the abelianization $\GU^{ab}$ of $\GU$.
  By construction, $\gr^F_{\ell} \Ho^{2,3}_N$ is spanned by elements $(\ref{intromotHbasis})$  indexed by the words in the letters $2$ and $3$, with $\ell$ letters $3$, and $m$ letters $2$, where $3\ell + 2 m = N$.  Let $(\gr^F_{\ell} \Ho^{2,3})^{\sim}$ be the vector space generated by the same words.
   The commutative Lie algebra $\mathrm{Lie}\, \GU^{ab}$ is generated by one element in every degree $-2i-1$ ($i\geq 1$). We compute their actions:
 \begin{equation} \label{introDNL} \partial_{N,\ell} :\, \gr^F_\ell \Ho_N^{2,3} \To \bigoplus_{1 <2i+1\leq N}  \gr^F_{\ell-1}\,  \Ho_{N-2i-1}^{2,3} 
\end{equation}
by constructing maps 
 \begin{equation} \label{introDNLformal} \partial^{\sim}_{N,\ell} :\, (\gr^F_\ell \Ho_N^{2,3})^{\sim} \To \bigoplus_{1 <2i+1\leq N}  (\gr^F_{\ell-1}\,  \Ho_{N-2i-1}^{2,3} )^{\sim}\ ,
\end{equation}
such that the following diagram commutes:
$$\begin{array}{ccc}
  (\gr^F_\ell \Ho_N^{2,3})^{\sim}  &   \To  &   \bigoplus_{1 <2i+1\leq N}  (\gr^F_{\ell-1}\,  \Ho_{N-2i-1}^{2,3} )^{\sim} \\
 \downarrow &   &   \downarrow \\
 \gr^F_\ell \Ho_N^{2,3}   &   \To  & \bigoplus_{1 <2i+1\leq N}  \gr^F_{\ell-1}\,  \Ho_{N-2i-1}^{2,3}   \ . 
\end{array}
$$
Using the explicit formula for the coaction $(\ref{introcoaction})$, we write  the maps $\partial^{\sim}_{N,\ell}$ as  matrices $M_{N,\ell}$ whose entries are linear combinations of certain 
rational numbers
$c_w \in \Q,$ 
where $w$  is a  word in  $\{2,3\}$ which has a single  $3.$
The numbers $c_w$ are defined as follows. We prove that for all $a,b\in \N$,  there exist numbers $\alpha^{a,b}_i\in \Q$ such that
\begin{equation} \label{introZagmot}
\zetam(\underbrace{2,\ldots, 2}_{a}, 3,\underbrace{2,\ldots, 2}_{b})= \alpha_n^{a,b} \zetam(2n+1)+  \sum_{i=1}^{n-1} \alpha^{a,b}_i\, \zetam(2i+1)  \zetam(\underbrace{2,\ldots, 2}_{n-i})\,   \ .\end{equation}
where $n=a+b+1$.   For any word $w$   of the form $2^{\{a\}}32^{\{b\}}$,   the number $c_w$
is the coefficient $\alpha^{a,b}_{n}$ of    $\zetam(2n+1)$  in $(\ref{introZagmot})$.  At this point we use a crucial arithmetic result due to Don Zagier  \cite{Zagier}, 
who  proved an explicit formula for $\zeta(2,\ldots, 2, 3,2,\ldots, 2)$ 
in terms of $\zeta(2i+1)$ and powers of $\pi$ of the same shape as $(\ref{introZagmot})$. Since the transcendence conjectures for multiple zeta values are not known, this does not immediately imply a formula for the coefficients $\alpha^{a,b}_i$. However, 
in $\S\ref{sect4}$ we show how to lift Zagier's  theorem  from  real numbers  to  motivic multiple zeta values, which 
yields a  precise formula for the coefficients $\alpha^{a,b}_i$, and in particular, $c_w$.   From this, we deduce  that the $c_w$ satisfy many special $2$-adic properties. By exploiting these properties, we show that the matrices
 $M_{N,\ell}$, which are rather complicated,  are  upper-triangular to leading $2$-adic order. 
 From this, we show that the maps  $(\ref{introDNL})$ are invertible for $\ell \geq 1$, and theorem   
$\ref{intromaintheorem} $  follows by an induction on the level. The proof  also shows that the level filtration is dual to the filtration induced by  the descending central series of $\GU$.

 P. Deligne has  obtained analogous results in the case $\Pro^1\backslash \{0, \mu_N,\infty\}$, where $\mu_N$ is the set of $N^{\mathrm{th}}$ roots of unity, and $N=2,3,4,6$  or $8$ 
 \cite{D}. His argument  uses the depth filtration and proves that it is dual to the filtration induced by the descending central series of $\GU$. 
  Note that  in the case $N=1$  this is false, starting from weight 12.

  The notes  \cite{Br} might serve as an introduction to this paper.

\section{Motivic Multiple Zeta Values}

\subsection{Preliminaries} \label{sectprelim}
With  the notations of \cite{DG}, \S5.13, let 
$\pio$ 
denote the de Rham realization of the motivic torsor of paths on $\Pro^1\backslash \{0,1,\infty\}$ from $0$ to $1$ (with tangent vectors $1$, $-1$ respectively).
It is the functor which to any $\Q$-algebra $R$ associates the 
 set $\pio(R)$ of group-like series in  the algebra $R\langle \langle e_0,e_1\rangle\rangle$ of non-commutative formal
power series in two generators. Its ring of affine functions over $\Q$ is isomorphic to 
\begin{equation}\label{opi01} \mathcal{O}(\pio) \cong \Q \langle e^0, e^1 \rangle\ ,
\end{equation} 
which is a commutative, graded algebra equipped with the shuffle product. To every word $w$ in the letters  $e^0,e^1$ corresponds the function 
which maps a series $S \in \pio(R) $ to the coefficient of $w$ (viewed as a word in $e_0,e_1$) in $S$.

Let $\dch\in \pio(\R)$ denote the de Rham image of the straight line from $0$ to $1$ (\cite{DG}, \S5.16).  
 It is a formal power series whose  coefficients are multiple zeta values, and is   also known as the Drinfeld associator.  It defines a function 
 \begin{equation}\label{dchdual}
 \dch : \mathcal{O}(\pio)\To \R
  \end{equation}
which maps a word $w$  in $e^0, e^1$ to the coefficient of $w$ in $dch$. 

Since $\pio$ is the de Rham realization of an (ind-)  mixed Tate motive over $\Z$, the group $\GMT$ acts upon it (\cite{DG}, \S5.12). The group $\GP$  in the introduction is the 
quotient of $\GMT$ by the kernel of this action.    Let $\GPU$ denote the corresponding quotient of $\GU$.
We shall denote the graded ring of affine functions on  $\GPU$ over $\Q$ by
$$\Ao=\mathcal{O}(\GPU)\ .$$
   The action $\GPU\times \pio \rightarrow \pio$ of the prounipotent part of $\GP$  gives rise to a coaction
  \begin{equation} \label{Ocoaction}
  \mathcal{O}(\pio)\To \Ao \otimes_{\Q} \mathcal{O}(\pio) . 
  \end{equation}

  Now let $A$ denote the group of automorphisms of the de Rham fundamental groupoid of $\Pro^1 \backslash \{0,1,\infty\}$ with base points $0,1$ and which respects $e_0\in \mathrm{Lie}\, ({}_0 \Pi_{0})$ and  $e_1\in \mathrm{Lie}\, ({}_1 \Pi_{1})$.  The action of $\GMT$ on $\pio$ factors through the action of  $A$ on $\pio$. The latter was computed by Ihara as follows (see \cite{DG}, \S5.15). Let   $ \tp$ denote the identity element in  $\pio$.   There is an isomorphism of schemes
 \begin{equation} a\mapsto a .\tp: A \overset{\sim}{\To} \pio \ . 
 \end{equation}    
 Via this identification, the action of $A$ on $\pio$ can be computed explicitly. In particular, its antisymmetrization is  given by the Ihara bracket. The dual coaction
  \begin{equation} \label{Gonchcoact}
\mathcal{O}(\pio) \overset{\Delta}{\To} \mathcal{O}(\pio) \otimes_{\Q} \mathcal{O}(\pio) \cong \mathcal{O}(A) \otimes_{\Q} \mathcal{O}(\pio)
  \end{equation} 
  was computed  by Goncharov in \cite{GG}, Theorem 1.2, except that the two right-hand factors are interchanged. The formula involves $\mathcal{O}( {}_a \Pi_b)$ for
  all $a,b\in \{0,1\}$ but can easily be rewritten in terms of  $ \mathcal{O}(\pio)$ only (this is the content of Properties $\Io$, $\Ii$, $\Iiii$   below). It follows that the coaction 
  $(\ref{Ocoaction})$ is obtained  by composing $\Delta$ of $(\ref{Gonchcoact})$ with the map
  \begin{eqnarray} \label{acton1}
  \mathcal{O}(\pio) & \To&  \Ao \\
\phi & \mapsto &  g \mapsto \phi(g. \tp) \ , \nonumber 
  \end{eqnarray}
applied to the left-hand factor of  $  \mathcal{O}(\pio) \otimes_{\Q}   \mathcal{O}(\pio)$. Note that since $\G_m$ acts trivially on $\tp$, the map $(\ref{acton1})$ necessarily loses information about the weight grading.
\subsection{Definition of motivic MZVs} Let $I \subset \mathcal{O}(\pio)$ be the kernel of the map $\dch$ $(\ref{dchdual})$. It describes the $\Q$-linear relations between multiple zeta values. Let 
$\Jmt \subseteq I$ be the largest graded ideal contained in $I$ which is closed under the coaction $(\ref{Ocoaction})$. 
\begin{defn} Define the graded coalgebra of motivic multiple zeta values to be 
\begin{equation} \label{Hodef} \Ho = \mathcal{O}(\pio) / \Jmt \ .\end{equation}
A word $w$ in the letters $0$ and $1$ defines   an element in  $(\ref{opi01})$. Denote its image in $\Ho$ by
\begin{equation} \label{defnImot} \Imot(0;w;1) \in \Ho\ ,
\end{equation}
 which we shall call a motivic iterated integral.  
For $n_0\geq0 $ and $n_1,\ldots, n_r \geq 1$, let 
  \begin{equation} \label{zetamotdefn} 
  \zetam_{n_0}(n_1,\ldots, n_r) = \Imot(0; \underbrace{0,\ldots,0}_{n_0},\underbrace{1,0,\ldots,0}_{n_1}, \ldots,\underbrace{1,0,\ldots,0}_{n_r}  ;1) \ .
  \end{equation}
In the case when $n_0=0$, we shall  simply write this $\zetam(n_1,\ldots, n_r)$
and call it a motivic multiple zeta value. As usual, we denote the grading on $\Ho$ by a subscript.
 \end{defn}
The coaction  $(\ref{Ocoaction})$ induces a coaction  
$\Delta:\Ho \rightarrow \Ao \otimes_{\Q} \Ho $.  By the discussion at the end of \S\ref{sectprelim}, it can be computed from $(\ref{Gonchcoact})$, i.e., the following diagram commutes:
\begin{equation} \label{coprodistherightone}
 \begin{array}{ccc}
 \mathcal{O}(\pio) & \To    & \Ao \otimes_{\Q} \mathcal{O}(\pio)   \\
 \downarrow &   & \downarrow   \\
  \Ho & \To  & \Ao \otimes_{\Q} \Ho   \ .
\end{array}
\end{equation}
Furthermore, the map $\dch$ $(\ref{dchdual})$ factors through $\Ho$. The resulting homomorphism  from $\Ho$ to $\R$ shall be called  the period map, which we denote by
\begin{equation} \label{perdef} per: \Ho \To  \R\ . \end{equation}

\begin{rem} The ideal $\Jmt$ could be  called  the ideal of motivic relations. A linear combination  $R\subset \mathcal{O}(\pio)$ of words is a relation between  motivic multiple zetas  if 

1). $R$ holds numerically (i.e.,  $per (R)=0$), 

2). $R'$ holds numerically for all  transforms $R'$ of $R$ under the coaction $(\ref{Ocoaction})$.

\noindent 
This argument is used in $\S 4$ to lift certain relations from multiple zetas to their motivic versions, and can be made into a kind of numerical algorithm   (see \cite{Br}). 
\end{rem}

\subsection{The role of $\zetam(2)$}  It is also convenient to consider the dual point of view. Let $\ZZ=\mathrm{Spec} \, \Ho$. It is the Zariski closure of the $\GMT$-orbit of $\dch$, 
 i.e., 
$$\ZZ = \overline{\GMT .dch }\ .$$
Thus $\ZZ$ is a subscheme of the extension of scalars  $\pio\otimes_{\Q}\R$, but is in fact defined over $\Q$ since $dch$ is Betti-rational.  
Let $\tau$ denote the action of $\G_m$ on $\pio$. The map $\tau(\lambda)$  multiplies elements of degree $d$ by $\lambda^d$.
 Let us choose a rational point  $\gamma \in \ZZ(\Q)$ which is  even, i.e., $\tau(-1)\gamma=\gamma$ (see \cite{DG}, \S5.20). 
 Since    $\GPU$ is the quotient of $\GU$ through which it acts on $\ZZ$, we obtain an isomorphism 
\begin{eqnarray} \label{gammamap}
\GPU \times \A^1 &\overset{\sim}{\To} & \ZZ  \\ 
(g,t) & \mapsto & g \, \tau({\sqrt{t}}) . \gamma  \nonumber 
\end{eqnarray}
The  parameter $t$  is retrieved  by taking the coefficient of $e_0 e_1$ in the series $g \, \tau({\sqrt{t}}).\gamma$. Thus $(\ref{gammamap})$ gives rise to  an  isomorphism of graded algebra comodules
\begin{eqnarray} \label{Hotens}
\Ho \cong \Ao \otimes_{\Q}\Q [\zetam(2)] \ ,
\end{eqnarray}
which depends on $\gamma$, where $\Delta(\zetam(2))= 1 \otimes \zetam(2)$. Most of our constructions will not in fact depend on this choice of $\gamma$, but we may fix it once  and for all. 
Since the leading term of $dch$ is $\tp$, we have  $\lim_{t \rightarrow 0} \tau(t) \dch = \tp$, which shows that 
$$ \GPU.\tp  \subseteq \ZZ\ . $$
Thus the map  $\Ho \rightarrow \Ao$ is induced by 
$(\ref{acton1})$, and sends $\zetam(2)$ to zero.

\begin{defn}Let us denote the graded ring of affine functions on $\GU$ over $\Q$ by
$$ \Amt = \mathcal{O}(\GU)\ , $$
and   define $\Homt=\Amt\otimes_{\Q}\Q[f_2],$
 where  the elements $f_2^k$  are in degree $2k$. Thus $\Homt$ is a graded algebra comodule over $\Amt$, and its  grading shall  be called
 the weight hereafter.  We shall also denote the coaction by 
 \begin{equation} \label{homtdelta}
 \Delta: \Homt \To \Amt \otimes_{\Q} \Homt\ .
 \end{equation} 
 It is uniquely determined by the property $\Delta f_2 = 1\otimes f_2$.
 \end{defn}
In conclusion,  the inclusion $\Ao\rightarrow \Amt$ which is dual to the quotient map $\GU \rightarrow \GPU$, induces 
an injective morphism of graded algebra comodules
\begin{eqnarray} \label{HotoHomt}
\Ho \To \Homt
\end{eqnarray}
which sends $\zetam(2)$ to $f_2$, by  $(\ref{Hotens})$. The map $(\ref{HotoHomt})$ implicitly depends on the choice of $\gamma$.
Abusively, we shall sometimes 
identify  motivic multiple zeta values with their images under $(\ref{HotoHomt})$, i.e., 
as elements in $\Homt$.

\subsection{Main properties} In order to write down the formula for the coaction on the motivic iterated integrals, we must slightly extend the notation    $(\ref{defnImot})$. 
 For all sequences $a_0,\ldots, a_{n+1} \in \{0,1\}$,  there are   elements 
 \begin{equation} \label{Sec2Imotdef}
 \Imot(a_0;a_1,\ldots,a_n ;a_{n+1}) \in \Ho_n \end{equation} 
 given  by $(\ref{defnImot})$ if $a_0=0$ and $a_{n+1}=1$, and uniquely defined by the   properties below.
  
  \2skip
 \begin{description}
  \item[I0]   If $n\geq 1$,   $\Imot(a_0;a_1,\ldots,a_n; a_{n+1})=0 \hbox{ if } a_0 =a_{n+1}$  or $a_1=\ldots =a_n$\ .  \2skip
  \item[I1]   $\Imot(a_0;a_1)=1$ for all $a_0,a_1\in \{0,1\}$  and $\zetam(2) = f_2$.  \2skip
  \item[I2] \emph{Shuffle product (special case).} We have  for $k\geq0, n_1,\ldots, n_r\geq 1$, 
  $$ \zetam_k(n_1,\ldots, n_r) = (-1)^k   \sum_{i_1+ \ldots +i_r=k} \! \binom{n_1+i_1-1}{i_1}\ldots \binom{n_r+i_r-1}{i_r} \zetam(n_1+i_1,  \ldots, n_r+i_r) \nonumber$$ \2skip 
  \item[I3] \emph{Reflection formulae}. For all $a_1,\ldots, a_n\in \{0,1\}$,  $$\Imot(0;a_1,\ldots, a_n;1) = (-1)^n \Imot(1;a_n,\ldots, a_1;0) =  \Imot(0;1-a_n,\ldots, 1-a_1;1)$$ \2skip
\end{description}
\noindent
The motivic multiple zeta values   of \cite{GG} are, up to a possible  sign,  the images of $(\ref{Sec2Imotdef})$ under the map 
$\pi:\Ho \rightarrow \Ao$ which sends $\zetam(2)$ to zero. We have shown ($(\ref{Gonchcoact})$,  $(\ref{coprodistherightone})$):

\begin{thm} \label{thmGonchCoproduct}  The coaction for the motivic multiple zeta values
\begin{equation}\label{fulldelta} \Delta: \Ho \rightarrow \Ao \otimes_{\Q} \Ho\ \end{equation}
is given by the same formula as \cite{GG}, Theorem 1.2, with the factors interchanged. In particular, 
if $a_0,\ldots, a_{n+1}\in \{0,1\}$, then $\Delta\, \Imot (a_0;a_1,\ldots, a_{n};a_{n+1})$ equals
\begin{equation}\label{defGcoproduct}  \sum_{i_0<i_1<  \ldots<i_{k+1} \atop i_0=0, i_{k+1}=n+1}  
  \pi \Big( \prod_{p=0}^k  \Imot (a_{i_p}; a_{{i_p}+1}, \tdots, a_{i_{p+1}-1} ;a_{i_{p+1}}) \Big)\otimes  \Imot (a_0;a_{i_1},\tdots, a_{i_k}; a_{n+1}) \end{equation}
where  the first  sum is also  over all values of $k$ for   $0\leq k\leq n$.
\end{thm}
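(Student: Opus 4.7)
The plan is to assemble the formula from pieces that are essentially in place by this point in the paper. First, I would invoke Goncharov's coproduct formula (\cite{GG}, Theorem 1.2), which computes the dual coaction on the full bigraded algebra $\bigoplus_{a,b\in\{0,1\}}\mathcal{O}({}_a\Pi_b)$ arising from the action of $A$ on the motivic torsor of paths. Restricting the source to $\mathcal{O}(\pio)=\mathcal{O}({}_0\Pi_1)$ yields a formula of the shape $(\ref{defGcoproduct})$, except that the iterated integrals appearing in each tensorand a priori have arbitrary tangential endpoints in $\{0,1\}$.

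Next, I would reduce every such term to an element of $\mathcal{O}(\pio)$ using properties $\Io$ and $\Iiii$. Property $\Io$ kills any tensorand whose endpoints coincide (so only terms in which consecutive subdivision points are distinct survive), while $\Iiii$ (path reversal together with the involution $x\mapsto 1-x$) converts every remaining tensorand, up to a sign, into an iterated integral from $0$ to $1$. Combined, they show that Goncharov's formula can be rewritten purely in terms of the symbols $\Imot(0;\cdot\,;1)$.

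The third step is to descend the resulting coaction from $\mathcal{O}(\pio)\otimes\mathcal{O}(\pio)$ to $\Ao\otimes\mathcal{O}(\pio)$ by applying the evaluation map $(\ref{acton1})$ to the left-hand factor. This step is forced by the definition of $\GPU$ as the quotient of $\GU$ acting on $\pio$ via the Ihara isomorphism $a\mapsto a.\tp$; dually, evaluation at $\tp$ is precisely the map $\pi$ appearing in $(\ref{defGcoproduct})$, and it sends $\zetam(2)\mapsto 0$ since $\G_m$ fixes $\tp$. Finally, because $\Jmt$ is by construction a coideal contained in $\ker(\dch)$, the coaction descends through the commutative square $(\ref{coprodistherightone})$ to yield the stated formula on $\Ho$.

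The main obstacle is the combinatorial bookkeeping in step two: one must verify that rewriting Goncharov's general-endpoint formula in terms of $\Imot(0;\cdot\,;1)$ reproduces exactly $(\ref{defGcoproduct})$, with no hidden signs, missing terms, or stray multiplicities. Everything else is a matter of unwinding definitions; the substantive inputs, namely Goncharov's original coproduct and the Ihara description of the action of $A$ on $\pio$, are cited from \cite{GG} and \cite{DG} respectively.
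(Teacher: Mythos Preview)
Your proposal is correct and follows essentially the same approach as the paper. The paper treats this theorem as already established by the preceding discussion, writing simply ``We have shown ($(\ref{Gonchcoact})$, $(\ref{coprodistherightone})$)'' and pointing back to the reduction of Goncharov's general-endpoint formula to $\mathcal{O}(\pio)$ via properties $\Io$, $\Ii$, $\Iiii$, the composition with the map $(\ref{acton1})$, and the descent through the commutative square $(\ref{coprodistherightone})$---which is exactly your four-step outline.
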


Lastly, the period map $per: \Ho \rightarrow \R$ can be computed as follows:
$$ per\big( \Imot(a_0;a_1,\ldots, a_n;a_{n+1} ) \big)=  \int_{a_0}^{a_{n+1}} \omega_{a_1}\ldots \omega_{a_n} \ ,\nonumber $$
where $\omega_{0} = {dt \over t}, \omega_1 = {dt \over 1-t}$, and  the right-hand side is a   shuffle-regularized  iterated integral
 (e.g., \cite{DG} \S5.16).  Note that the sign of $\omega_1$  varies in the literature.
 Here,  the signs are chosen such that the  period of the motivic multiple zeta values are 
\begin{equation} \label{periodmapsigns}
per  \big( \zetam(n_1,\ldots, n_r)\big)  =  \zeta(n_1,\ldots, n_r) \quad \hbox{ when } n_r\geq 2\ .
\end{equation}

\subsection{Structure of $\Homt$}
The structure of $\MT(\Z)$ is determined  by the data:
$$\mathrm{Ext}_{\MT(\Z)}^1(\Q(0),\Q(n)) \cong \left\{
                           \begin{array}{ll}
                             \Q\  & \hbox{if } n\geq 3 \hbox{ is odd}\ ,  \nonumber \\
                             0\   & \hbox{otherwise} \ ,
                           \end{array}
                         \right.
$$
 and the fact that the $\mathrm{Ext}^2$'s vanish.  Thus $\MT(\Z)$ is equivalent to the category of representations of a group  scheme $\GMT$ over $\Q$, which is a semi-direct product
 $$ \GMT\cong  \GU \rtimes \mathbb{G}_m \ ,$$
 where $\GU$ is a prounipotent group whose Lie algebra $\mathrm{Lie}\, \GU$ is isomorphic to the free Lie algebra with one generator $\sigma_{2i+1}$ in every degree 
 $-2i-1$, for $i\geq 1$.
Let  $\Amt$ be  the graded  ring of functions on $\GU$,  where the  grading is  with respect to the action of $\G_m$. 
Since the degrees of the $\sigma_{2i+1}$ tend to minus infinity, no information is   lost in passing to  the graded  version  $(\mathrm{Lie}\, \GU)^{gr}$ 
of  $ \mathrm{Lie}\, \GU$ (Proposition 2.2 $(ii)$ of \cite{DG}). 
By the above, $\Amt$  is  non-canonically isomorphic to the graded dual of the universal envelopping algebra of $(\mathrm{Lie}\, \GU)^{gr}$ over $\Q$. 
 We shall denote this by 
 $$\U' = \Q\langle f_3,f_5,\ldots \rangle\ .$$
Its underlying vector space has  a basis consisting of   non-commutative words in symbols $f_{2i+1}$ in degree $2i+1$,  and the multiplication is given by the shuffle product $\sha$.  The coproduct
 is given by  the following  
 deconcatenation formula: 
  \begin{eqnarray} \label{Udelta}
 \Delta:\U' & \rightarrow  & \U' \otimes_{\Q} \U'  \\
\Delta(f_{i_1}\ldots f_{i_n}) & =  &  \sum_{k=0}^{n} f_{i_1}\ldots f_{i_{k}}\otimes  f_{i_{k+1}}\ldots f_{i_n}  \ , \nonumber
\end{eqnarray}
when $n\geq 0$. Let us consider the following universal comodule
\begin{equation}
\U=  \Q\langle f_3,f_5,\ldots \rangle \otimes_{\Q}  \Q[f_2]
\end{equation}
where $f_2$  is of degree $2$,  commutes with all generators $f_{2n+1}$ of odd degree, and the coaction $\Delta: \U \rightarrow \U' \otimes_{\Q} \U$ satisfies $\Delta(f_2) =1 \otimes f_2$.  The degree will also be referred to as the weight.
By the above discussion,   there exists  a non-canonical isomorphism 
\begin{equation} \label{HisomU}
\phi:\Homt\cong \U 
\end{equation} 
 of algebra comodules which sends $f_2$ to $f_2$.   These notations are useful for explicit  computations, for which it can be  convenient to vary the choice of map $\phi$  (see \cite{Br}).

 \begin{lem} Let $d_k=\dim \U_k$, where $\U_k$ is the graded piece of $\U$ of weight $k$.  Then
\begin{equation} \label{enumeration} 
\sum_{k\geq 0} d_k t^k = {1\over  1-t^2-t^3}\ . \end{equation}
In particular, $d_0=1, d_1=0, d_2=1$ and  $d_k= d_{k-2}+d_{k-3}$ for $k\geq 3$.
\end{lem}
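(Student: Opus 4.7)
The plan is to compute the Hilbert series of $\U$ directly from its tensor-product description $\U = \Q\langle f_3,f_5,\ldots\rangle\otimes_\Q\Q[f_2]$, and then deduce the recurrence by clearing denominators.

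First I would compute the Hilbert series of $\Q\langle f_3,f_5,\ldots\rangle$. As a graded vector space this is the tensor algebra on generators $f_{2i+1}$ of degree $2i+1$ for $i\geq1$ (the shuffle product plays no role in counting dimensions, only the underlying basis of non-commutative words matters). Since the generating series of the generators is $\sum_{i\geq 1} t^{2i+1} = t^3/(1-t^2)$, the Hilbert series of the tensor algebra is
\[
\sum_{k\geq 0}\dim\bigl(\Q\langle f_3,f_5,\ldots\rangle_k\bigr)\, t^k \;=\; \frac{1}{1-\dfrac{t^3}{1-t^2}} \;=\; \frac{1-t^2}{1-t^2-t^3}.
\]
Next, since $\Q[f_2]$ is a polynomial ring on a generator of degree $2$, its Hilbert series is $1/(1-t^2)$. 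Because the degree on $\U$ is additive under the tensor product, its Hilbert series is the product
\[
\sum_{k\geq 0} d_k t^k \;=\; \frac{1-t^2}{1-t^2-t^3}\cdot\frac{1}{1-t^2}\;=\;\frac{1}{1-t^2-t^3},
\]
which is the desired identity \eqref{enumeration}.

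Finally, multiplying both sides by $1-t^2-t^3$ yields $\sum_{k\geq 0} d_k t^k - \sum_{k\geq 0} d_k t^{k+2} - \sum_{k\geq 0} d_k t^{k+3} = 1$; comparing coefficients gives $d_0=1$, $d_1=0$, $d_2=d_0=1$, and $d_k = d_{k-2}+d_{k-3}$ for all $k\geq 3$. There is no serious obstacle here: the only point requiring a moment's care is that the shuffle-product structure on $\Q\langle f_3,f_5,\ldots\rangle$ does not affect the dimension count, since its underlying graded vector space is by definition spanned by non-commutative words in the $f_{2i+1}$.
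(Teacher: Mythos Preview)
Your proof is correct and follows essentially the same approach as the paper: compute the Poincar\'e series of $\Q\langle f_3,f_5,\ldots\rangle$ as $(1-t^2)/(1-t^2-t^3)$ and multiply by $1/(1-t^2)$ for $\Q[f_2]$. The paper writes the first series as $1/(1-t^3-t^5-\cdots)$ rather than $1/\bigl(1-t^3/(1-t^2)\bigr)$, but this is the same computation, and your added remarks (on the shuffle product and the derivation of the recurrence) are correct elaborations.
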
 
\begin{proof} The Poincar\'e series of $\Q\langle f_3,f_5, \ldots \rangle$ is given by 
${1\over 1-t^3-t^5-\ldots } = {1-t^2 \over 1-t^2-t^3}$.
If we multiply by the Poincar\'e series ${1\over 1-t^2}$  for $\Q[f_2]$, we obtain $(\ref{enumeration})$.
\end{proof}
 \begin{defn} \label{defnFNnotation} It will be convenient to define an element $f_{2n} \in \U_{2n}$, for $n\geq 2$, by  
 $$f_{2n}=b_n f_2^n \ , $$where $b_n \in \Q^{\times}$ is the constant in Euler's relation $\zeta(2n)=b_n \zeta(2)^n$. 
\end{defn}
 
 Let us denote the Lie coalgebra of indecomposable elements of $\U'$ by
 \begin{equation} \label{Lnotcurlydefn}
 L= {\U'_{>0}\over \U'_{>0} \U'_{>0} }\ ,
 \end{equation}
and  for any $N\geq 1$, let $\pi_{N}: \U'_{>0}\rightarrow  L_N$ denote the quotient map followed by projection onto the graded part of weight $N$.
For any $r\geq 1$,    consider the map
   \begin{equation} \label{UDr} D_{2r+1}: \U  \To L_{2r+1} \otimes_{\Q} \U \end{equation}
 defined by composing $\Delta'= \Delta - 1\otimes id$ with $\pi_{2r+1}\otimes id$. Let
 \begin{equation} \label{UDltN} D_{<N}=\bigoplus_{1 < 2i+1< N} D_{2i+1} \ . 
 \end{equation}
 
 \begin{lem} \label{lemkerDonU} $(\ker D_{<N}) \cap \U_N = \Q\, f_N\ .$
 \end{lem}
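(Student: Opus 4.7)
The plan is to reduce the statement to a claim in the shuffle Hopf algebra $\U'$ and then argue by duality with $\mathbf{U} := \mathbf{U}((\mathrm{Lie}\,\GU)^{gr})$, the free associative $\Q$-algebra on $V := \bigoplus_{i\geq 1}\Q\sigma_{2i+1}$, of which $\U'$ is the graded dual.

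First, using $\U = \U'\otimes_\Q \Q[f_2]$, write $\xi = \sum_k \xi_k f_2^k$ with $\xi_k\in\U'_{N-2k}$. Since $\Delta f_2 = 1\otimes f_2$ and $\Delta$ is multiplicative, one obtains $D_{2r+1}(\xi) = \sum_k D_{2r+1}(\xi_k)(1\otimes f_2^k)$; because the summands for different $k$ lie in linearly independent direct summands $L_{2r+1}\otimes \U'_{N-2k-2r-1}f_2^k$ of $L_{2r+1}\otimes \U_{N-2r-1}$, the hypothesis $D_{<N}(\xi)=0$ forces $D_{2r+1}(\xi_k)=0$ for every $k$ and every $1<2r+1<N$. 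Since $D_{2r+1}(\xi_k)$ vanishes automatically once $2r+1>N-2k$, this reduces the problem to showing: for $\eta\in\U'_M$ with $M>0$ satisfying $D_{2r+1}(\eta)=0$ for every $1<2r+1\leq\min(M,N-1)$, one has $\eta\in\Q f_M$ if $M=N$ is odd, and $\eta=0$ in all other cases.

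To prove this, I would dualize. The compatibility $\langle \Delta\phi, a\otimes b\rangle = \langle\phi, ab\rangle$ (valid for all $a,b\in\mathbf{U}$), combined with the identification $L_{2r+1}^* \cong (\mathrm{Lie}\,\GU)_{2r+1}\subset\mathbf{U}_{2r+1}$, shows that $\ker D_{<N}\cap\U'_M$ is the annihilator in $\U'_M$ of the subspace
\[
J_M \;:=\; \sum_{1<2r+1\leq\min(M,N-1)} (\mathrm{Lie}\,\GU)_{2r+1}\cdot\mathbf{U}_{M-2r-1}\;\subseteq\;\mathbf{U}_M.
\]
Now split monomials by word-length: $\mathbf{U}_M = V_M\oplus (V_{<M}\cdot\mathbf{U}_{>0})_M$, using that the first letter of any length-$\geq 2$ monomial of weight $M$ has weight strictly less than $M$. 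The second summand always lies in $J_M$. For $M<N$, also $V_M\subseteq (\mathrm{Lie}\,\GU)_M\subseteq J_M$, so $J_M=\mathbf{U}_M$; for $M=N$ even, $V_N=0$, same conclusion. For $M=N$ odd, $V_N=\Q\sigma_N$, and every element of $J_N$ is a sum of length-$\geq 2$ monomials (each Lie factor expands as length-$\geq 1$, multiplied by a nonzero element of $\mathbf{U}_{>0}$), so $\sigma_N\notin J_N$; hence $\mathbf{U}_N/J_N=\Q\sigma_N$, whose annihilator is the line spanned by the dual basis vector $f_N\in\U'_N$. Reassembling the $\xi_k$ yields $\xi\in\Q f_N$.

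The main technical obstacle is the duality identification of $\ker D_{<N}\cap\U'_M$ with the annihilator of $J_M$, which requires tracking $\pi_{2r+1}^*$ as the inclusion of primitives $(\mathrm{Lie}\,\GU)_{2r+1}\hookrightarrow\mathbf{U}_{2r+1}$ and subtracting $1\otimes\mathrm{id}$ from $\Delta$ with the correct weight bookkeeping. Once the dualization is set up, the length-based decomposition of $\mathbf{U}_M$ and the argument isolating $\sigma_N$ are elementary.
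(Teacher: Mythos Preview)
Your proof is correct, and it rests on the same underlying duality between deconcatenation in $\U'$ and concatenation in $\mathbf{U}$ that the paper exploits, but the paper takes a short primal route while you take the dual one. Instead of first stripping off powers of $f_2$ and then passing to $\mathbf{U}$ to identify $\ker D_{<N}$ with the annihilator of $J_M$, the paper works directly in $\U$: it writes any $\xi\in\U_N$ uniquely in concatenation form
\[
\xi=\sum_{1<2r+1<N} f_{2r+1}\, v_r \;+\; c\,f_N,\qquad v_r\in\U_{N-2r-1},\ c\in\Q,
\]
and then observes that $(f^\vee_{2r+1}\otimes\mathrm{id})\circ D_{2r+1}\,\xi=v_r$, since $f^\vee_{2r+1}$ (your $\sigma_{2r+1}$) pairs to $1$ with the single letter $f_{2r+1}$ and to $0$ with every word of concatenation-length $\geq 2$. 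This single identity finishes the argument immediately; the convention $f_{2n}=b_n f_2^n$ of Definition~\ref{defnFNnotation} already absorbs the even case, so no separate handling of the $f_2$-component is needed. Your route gives a more structural description (the image $J_M$ of the adjoint map and its codimension), which is pleasant, but the extra bookkeeping---separating out $f_2^k$, setting up the duality $\pi_{2r+1}^*$, and the case split on $M$---is exactly what the paper's one-line computation avoids by reading the same pairing in the other direction.
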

 \begin{proof} Every element $\xi \in \U_N$ can be uniquely written in the form
 $$\xi = \sum_{1< 2r+1 < N} f_{2r+1} v_r + c f_N$$
 where $c\in \Q$,  $v_r \in \U_{N-2r-1}$, and the multiplication on the right-hand side is the concatenation product. The graded dual of $L$ is isomorphic to the free Lie algebra with generators $f^{\vee}_{2r+1}$ in degrees $-2r-1$ dual to the $f_{2r+1}$.  Each element $f^{\vee}_{2r+1}$
 defines a map $f^{\vee}_{2r+1}: L \rightarrow \Q$ which sends $f_{2r+1}$ to $1$.
  By definition, 
 $$(f^{\vee}_{2r+1} \otimes id )\circ D_{2r+1} \xi = v_r\ ,$$
 for all $1<2r+1<N$. It follows immediately that if $\xi$ is in the kernel of $D_{<N}$ then it is of the form $\xi = cf_N$. Since $D_{<N} f_N=0$, the result follows.
 \end{proof}
\section{Cogenerators of the coalgebra} 

\subsection{Infinitesimal coaction}
In order to simplify the formula $(\ref{defGcoproduct})$, let 
\begin{equation}\label{Ldef}
\Lo = {\Ao_{>0} \over \Ao_{>0} \Ao_{>0}}\end{equation}
denote the Lie coalgebra of $\Ao=\Ho/ f_2\Ho$, and let $\pi:\Ho_{>0}\rightarrow \Lo$ denote the quotient map.  Since $\Lo$ inherits a weight grading from $\Ho$, let  $\Lo_{N}$ denote the elements of $\Lo$ of homogeneous weight $N$, and let $p_N: \Lo \rightarrow \Lo_{N}$ be  the projection map.

\begin{defn} \label{defnDronH} By analogy with $(\ref{UDr})$ and $(\ref{UDltN})$, define for every $r\geq 1$   a map
$$D_{2r+1} : \Ho \To \Lo_{2r+1}\otimes_{\Q} \Ho $$
 to be  $( \pi\otimes id) \circ \Delta'$, where $\Delta'=\Delta-1\otimes id$,  followed by  $p_{2r+1}\otimes id$.  Let
 \begin{equation}\label{DNdef} D_{<N}= \bigoplus_{3\leq 2r+1<N} D_{2r+1}\ . \end{equation}
 \end{defn}
  
It follows from  this definition  that the maps $D_{n}$, where $n=2r+1$, are derivations:
\begin{equation} \label{Drarederivations} 
D_{n} (\xi_1 \xi_2) = (1 \otimes  \xi_1) D_{n} (\xi_2) + (1 \otimes \xi_2) D_{n}(\xi_1)  \quad \hbox{ for all } \xi_1,\xi_2 \in \Ho \ .
\end{equation} 
By  $(\ref{defGcoproduct})$, the action of $D_n$  on $ \Imot(a_0;a_1,\ldots, a_N;a_{N+1})$ is given    by:
\begin{equation}\label{mainformula}   
 \sum_{p=0}^{N-n} \pi \big(\Imot(a_{p} ;a_{p+1},\tdots, a_{p+n}; a_{p+n+1})\big) \otimes   \Imot(a_{0}; a_1, \tdots, a_{p}, a_{p+n+1}, \tdots ,  a_N ;a_{N+1})  \ .\end{equation}
  We call the  sequence of consecutive elements $(a_p; a_{p+1},\ldots,a_{p+n}; a_{p+n+1})$ on the left  a \emph{subsequence of length} $n$  of the original sequence, 
  and $( a_0;a_1, \tdots, a_{p}, a_{p+n+1}, \tdots ,  a_N;a_{N+1})$  will be called the \emph{quotient sequence}, by analogy with the Connes-Kreimer coproduct.

\subsection{Zeta elements and kernel of $D_{<N}$} 
\begin{lem} \label{lemzetaelements} Let $n\geq 1$. The zeta element   $\zetam(2n+1)\in \Ho$ is non-zero and satisfies
$$
\Delta\, \zetam(2n+1) = 1 \otimes \zetam(2n+1) + \pi( \zetam(2n+1)) \otimes 1\ . 
$$
Furthermore, Euler's relation $\zeta(2n)= b_n \zeta(2)^n$, where $b_n \in \Q$, holds for the $\zetam$'s:
\begin{equation} \label{zetam2n} \zetam(2n) = b_n \zetam(2)^n \ .
\end{equation}
 \end{lem}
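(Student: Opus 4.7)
The plan is to exploit the special shape of $\zetam(N)=\Imot(0;1,0,\ldots,0;1)$---one letter $1$ followed by $N-1$ zeros---to show via the explicit formula (\ref{mainformula}) and property $\mathbf{I0}$ that $D_{<N}\zetam(N)=0$, then transport this through $\Ho\hookrightarrow\Homt$ and the isomorphism $\phi:\Homt\overset{\sim}{\to}\U$ and invoke Lemma \ref{lemkerDonU} to pin $\zetam(N)$ down to a $\Q$-multiple of $f_N$, finally fixing the scalar via the period map.

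Concretely, write $a_0=0$, $a_1=1$, $a_2=\cdots=a_N=0$, $a_{N+1}=1$ and consider a subsequence $(a_p;a_{p+1},\ldots,a_{p+2r+1};a_{p+2r+2})$ with $r\ge 1$ and $2r+1<N$. If $p=0$, then $a_{p+2r+2}=a_{2r+2}=0=a_0$ (since $2\le 2r+2\le N$), so the two endpoints coincide; if $p\ge 1$, then the interior letters $a_{p+1},\ldots,a_{p+2r+1}$ all lie in the run of zeros $\{a_2,\ldots,a_N\}$. In either case $\mathbf{I0}$ kills the subsequence, so every term on the right-hand side of (\ref{mainformula}) vanishes, and hence $D_{<N}\zetam(N)=0$. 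Lemma \ref{lemkerDonU} now gives $\phi(\zetam(N))=c\,f_N$ for some $c\in\Q$.

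For $N=2n$, this reads $\zetam(2n)=c\,b_n\,\zetam(2)^n$ (using Definition \ref{defnFNnotation} and $\phi(\zetam(2))=f_2$); comparing with Euler's identity $\zeta(2n)=b_n\zeta(2)^n$ under $per$ forces $c=1$, so $\zetam(2n)=b_n\zetam(2)^n$. For $N=2n+1$, since $per(\zetam(2n+1))=\zeta(2n+1)\neq 0$, we have $\zetam(2n+1)\neq 0$, and the deconcatenation coproduct (\ref{Udelta}) applied to the single letter $f_{2n+1}$ gives
$$\Delta\,\zetam(2n+1)=1\otimes\zetam(2n+1)+c\,f_{2n+1}\otimes 1.$$
Since $\pi:\Ho\to\Ao$ corresponds under $\phi$ to the projection $\U\to\U'$ killing $f_2$, and since $c\,f_{2n+1}\in\U'$ is already fixed by this projection, $c\,f_{2n+1}$ is precisely the image of $\pi(\zetam(2n+1))$, yielding the claimed formula. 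The only delicate step is the combinatorial vanishing above; the rest is formal transport of information through the structural isomorphisms.
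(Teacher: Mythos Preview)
Your proof is correct and follows essentially the same approach as the paper: both arguments observe that every strict subsequence of $(0;10^{N-1};1)$ of length $\ge 1$ is killed by $\mathbf{I0}$, deduce that $\zetam(N)$ is primitive (the paper reads this off from the full coaction formula, you from $D_{<N}$ via Lemma~\ref{lemkerDonU}), and then fix the scalar using the period map. The only cosmetic difference is that you recover the coaction formula for $\zetam(2n+1)$ by transporting back from $\U$, whereas the paper states it directly from the vanishing of intermediate terms in $(\ref{defGcoproduct})$.
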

\begin{proof} Consider $\zetam(N) = \Imot(0;10^{N-1};1)$. By relation $\Io$,  its strict subsequences of length at least one are killed by $\Imot$, and  so
$\Delta \, \zetam(N) = 1\otimes \zetam(N) + \pi(\zetam(N)) \otimes 1$ by  
$(\ref{mainformula})$.
From the structure of $\U$, it  follows that an isomorphism  $\phi$  $(\ref{HisomU})$ maps $\zetam(2n+1)$ to $\alpha_n f_{2n+1}$, for some $\alpha_n \in \Q$, and $\zetam(2n)$ to $\beta_n f^n_2$ for some $\beta_n\in \Q$.
  Taking the period map yields $\alpha_n\neq 0$, and $\beta_n=\zeta(2n) \zeta(2)^{-n}=b_n$.
\end{proof}

We can therefore normalize our choice of  isomorphism  $(\ref{HisomU})$  so that
$$\Ho \subseteq  \Homt \overset{\phi}{\To} \U  $$
maps   $\zetam(2n+1)$ to $f_{2n+1}$. By definition \ref{defnFNnotation},     we can therefore write:
\begin{equation} \label{phinormed}
\phi(\zetam(N))=f_N \quad   \hbox{ for all } \quad  N\geq 2 \ . 
\end{equation}
In particular, $\zetam(2)$ and $\zetam(2n+1)$, for $n\geq 1$, are algebraically independent in $\Ho$.

\begin{thm} \label{propkerD} Let $N\geq 2$.  The kernel of $D_{<N}$ is one-dimensional in  weight $N$:  
$$\ker D_{<N} \cap \Ho_N = \Q\,\zetam(N)\ .$$
\end{thm}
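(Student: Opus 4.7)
The strategy is to transport the structural Lemma \ref{lemkerDonU} (which computes the kernel of $D_{<N}$ on the model $\U$) back to $\Ho$ via the normalized injection $\phi : \Ho \hookrightarrow \Homt \xrightarrow{\sim} \U$ supplied by (\ref{HotoHomt}), (\ref{HisomU}), and (\ref{phinormed}).

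The easy inclusion $\Q\,\zetam(N) \subseteq \ker D_{<N}$ is handled case by case. For odd $N = 2n+1$, Lemma \ref{lemzetaelements} gives $\Delta'\zetam(N) = \pi(\zetam(N))\otimes 1$, which is concentrated in $\Ho$-weight $0$ on the right; hence every projection $p_{2r+1}\otimes\mathrm{id}$ with $3 \leq 2r+1 < N$ annihilates it. For even $N = 2n$, the Euler identity $\zetam(N) = b_n\,\zetam(2)^n$ from (\ref{zetam2n}), combined with $\Delta\zetam(2) = 1\otimes\zetam(2)$, yields $\Delta'\zetam(N) = 0$, so $D_{<N}\zetam(N) = 0$ in both cases.

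For the reverse inclusion, suppose $\xi \in \Ho_N$ with $D_{<N}\xi = 0$. The plan is to show $\phi(\xi) \in \ker D_{<N} \cap \U_N$, which by Lemma \ref{lemkerDonU} equals $\Q\,f_N$. This requires the commutativity of the square
\[
\begin{array}{ccc}
\Ho & \xrightarrow{D_{<N}} & \bigoplus_{r} \Lo_{2r+1}\otimes_{\Q}\Ho \\
\downarrow & & \downarrow \\
\U & \xrightarrow{D_{<N}} & \bigoplus_{r} L_{2r+1}\otimes_{\Q}\U
\end{array}
\]
whose vertical arrows are $\phi$ on the right-hand factors and, on the left-hand factors, the map $\Lo \to L$ induced by the inclusion of graded algebras $\Ao \hookrightarrow \Amt \cong \U'$. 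Commutativity rests on two observations: (i) the coaction on $\Ho \subseteq \Homt$ is the restriction of the coaction on $\Homt$, which $\phi$ identifies with $\Delta : \U \to \U'\otimes\U$; and (ii) since $\Ao\hookrightarrow\Amt$ respects products, it passes to a well-defined map on indecomposable quotients and commutes with the projections $\pi$ entering the definition of $D_{<N}$.

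Given commutativity, $D_{<N}\phi(\xi) = 0$, so $\phi(\xi) = c\,f_N$ for some $c\in\Q$. The normalization (\ref{phinormed}) gives $f_N = \phi(\zetam(N))$, whence $\phi(\xi - c\,\zetam(N)) = 0$; the injectivity of $\phi$ forces $\xi = c\,\zetam(N)$. The only step requiring care is the commutativity of the square above, but this is pure bookkeeping once the identifications of coactions are in place; all substantive content is in Lemmas \ref{lemkerDonU} and \ref{lemzetaelements}, together with the normalization (\ref{phinormed}).
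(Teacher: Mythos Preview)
Your proof is correct and follows essentially the same approach as the paper, which simply states that the result follows from Lemma~\ref{lemkerDonU} via the normalized isomorphism $\phi$ of (\ref{phinormed}). You have spelled out in detail the commutativity of the relevant square and treated the easy inclusion separately, but the underlying idea is identical.
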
 
\begin{proof} This  follows from lemma \ref{lemkerDonU}, via such an isomorphism $\phi$. 
 \end{proof}
 Note that the map $\Lo\rightarrow L$ of Lie coalgebras induced by the inclusion $\Ao \subseteq\Amt\cong \U'$ is also injective, by standard results  on  Hopf algebras. 
\subsection{Some relations between motivic multiple zeta values}  Using theorem $\ref{propkerD}$ we lift relations between multiple zeta values to their 
motivic versions. Hereafter let 
$2^{\{n\}}$ denote a sequence of $n$ consecutive 2's, and let $\zetam(2^{\{0\}})=1\in \Ho$. For any word $w$ in the alphabet $\{2,3\}$, define the weight of $w$ to be $2 \deg_2 w + 3 \deg_3 w$.
\begin{lem} \label{lemlev0}  The element  $\zetam(2^{\{n\}})$ is a rational multiple of $\zetam(2)^n$. 
\end{lem}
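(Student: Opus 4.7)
The plan is to show that $\zetam(2^{\{n\}})$ lies in the kernel of $D_{<2n}$, and then invoke Theorem \ref{propkerD} together with $(\ref{zetam2n})$ to conclude. Concretely, by $(\ref{zetamotdefn})$ we have
$$\zetam(2^{\{n\}}) = \Imot(0;\, a_1, a_2, \ldots, a_{2n};\, 1),$$
where the inner word is $1,0,1,0,\ldots,1,0$, i.e.\ $a_i = 1$ for $i$ odd and $a_i = 0$ for $i$ even (in the range $1\le i \le 2n$), together with the boundary values $a_0 = 0$ and $a_{2n+1} = 1$.

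Next I would apply the formula $(\ref{mainformula})$ to $D_{2r+1}\zetam(2^{\{n\}})$ for each $r$ with $3 \le 2r+1 < 2n$. Each summand on the right-hand side is of the form
$$\pi\bigl(\Imot(a_p;\, a_{p+1},\ldots, a_{p+2r+1};\, a_{p+2r+2})\bigr) \otimes \Imot(a_0;\, a_1,\ldots, a_p, a_{p+2r+2},\ldots, a_{2n};\, 1),$$
with $0 \le p \le 2n - (2r+1)$. The key parity observation is that the index gap between the endpoints of the inner integral is $2r+2$, which is even. With the indexing described above, $a_i$ depends only on the parity of $i$ for $0 \le i \le 2n$, and moreover $a_{2n+1} = 1 = a_{2n-2r-1}$ (which is odd). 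Hence in every range of $p$, $a_p = a_{p+2r+2}$, so by property $\Io$ the inner integral vanishes (its length $2r+1 \geq 3$ is $\geq 1$). Therefore $D_{2r+1}\zetam(2^{\{n\}}) = 0$ for all $r$ with $3 \le 2r+1 < 2n$, which says exactly that $\zetam(2^{\{n\}}) \in \ker D_{<2n}$.

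By Theorem \ref{propkerD}, the kernel of $D_{<2n}$ in weight $2n$ is one-dimensional, spanned by $\zetam(2n)$, so there exists $c \in \Q$ with $\zetam(2^{\{n\}}) = c\, \zetam(2n)$. Applying $(\ref{zetam2n})$ then gives $\zetam(2^{\{n\}}) = c\, b_n\, \zetam(2)^n$, proving the claim. The only step that requires any genuine care is the parity check that the endpoints $a_p$ and $a_{p+2r+2}$ always agree in every summand of $(\ref{mainformula})$; once that is in place, $\Io$ kills every inner integral and the rest is immediate from the structural results already established.
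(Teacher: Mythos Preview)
Your proof is correct and follows essentially the same approach as the paper: both arguments use the parity observation that every odd-length subsequence of $(0;1010\ldots 10;1)$ has equal endpoints, invoke $\Io$ to kill all terms in $D_{2r+1}$, and then apply Theorem~\ref{propkerD} and $(\ref{zetam2n})$. Your treatment of the boundary case $p+2r+2=2n+1$ is slightly more explicit than the paper's, but the idea is identical.
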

\begin{proof} For reasons of parity, every  strict subsequence of $(0;1010\ldots 10;1)$ of odd length begins and ends in the same symbol, and   corresponds to a zero motivic iterated integral by  $\Io$. Therefore
$D_{2r+1} \zetam(2^{\{n\}})=0$ for all $r\geq 1$. By proposition \ref{propkerD} it  is  a multiple of $\zetam(2n)$, that is $\zetam(2)^n$.
The multiple  is equal to $\zeta(2^{\{n\}})/\zeta(2)^n>0$.
\end{proof} 
The coefficient in the lemma can be  determined by the well-known  formula  
\begin{equation} \label{zeta222}
\zeta(2^{\{n\}}) = { \pi^{2n} \over  (2n+1)!}\ .
\end{equation}
 We need the following trivial observation, valid for $n\geq 1$:
\begin{equation} \label{12nshuffle} 
\zetam_1(2^{\{n\}}) =  -2 \sum_{i=0}^{n-1} \zetam(2^{\{i\}}32^{\{n-1-i\}})\ , 
\end{equation} 
which  follows immediately from  relation $\Iii$.

\begin{lem} \label{coprodlevel1} Let $a,b\geq 0$ and $1\leq r \leq  a+b$.  Then   
$$D_{2r+1}\,  \zetam(2^{\{a\}}32^{\{b\}}) = \pi(\xi^r_{a,b}) \otimes \zetam(2^{\{a+b+1-r\}})\ ,$$
 where $\xi^r_{a,b}$ is given by (sum over all indices $\alpha,\beta\geq0 $ satisfying $\alpha+\beta+1=r$): 
 $$\xi^r_{a,b}= \sum_{{\alpha\leq a \atop \beta \leq b}} \zetam(2^{\{\alpha\}} 3 2^{\{\beta\}})  - \sum_{\alpha \leq a \atop \beta< b} \zetam(2^{\{\beta\}} 3 2^{\{\alpha\}}) +\big(\mathbb{I}(b\geq r) -\mathbb{I}(a\geq r)\big)\, \zetam_1(2^{\{r\}}) \ ,$$
 The symbol $\mathbb{I}$ denotes the indicator function.
\end{lem}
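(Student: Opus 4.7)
The plan is to apply the formula $(\ref{mainformula})$ for $D_{2r+1}$ directly to $\Imot(0;w;1)$, where $w = (10)^a 100 (10)^b$ has length $N = 2a+3+2b$ with boundary letters $a_0 = 0,\, a_{N+1} = 1$. A key preliminary observation is that any subsequence of odd length $2r+1$ lying entirely inside the alternating prefix $(10)^a$ or the alternating suffix $(10)^b$ has $a_{s-1} = a_{t+1}$ by parity (the constraint $t-s = 2r$ forces $s-1$ and $t+1$ to share parity, hence the same value in a strictly alternating pattern), so such subsequences are killed by $\Io$. Consequently only subsequences meeting the ``anomaly'' at positions $\{2a+1,2a+2,2a+3\}$ can contribute, and for every one of these the quotient sequence collapses to $(0;(10)^{a+b+1-r};1)$, giving $\zetam(2^{\{a+b+1-r\}})$ as the common right tensor factor.

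The surviving subsequences fall into three families. \emph{Family 1} consists of the full straddles $[2a+1-2\alpha,\,2a+3+2\beta]$ with even shifts, $\alpha,\beta \geq 0$, $\alpha+\beta = r-1$: the subword reads literally as $(10)^\alpha 100 (10)^\beta$ with flanking letters $0,1$, so the left factor is $\zetam(2^{\{\alpha\}}32^{\{\beta\}})$; the constraints $\alpha \leq a,\ \beta \leq b$ come from fitting inside $w$, and these give the first sum of $\xi^r_{a,b}$. \emph{Family 2} comprises the full straddles $[2a-2\alpha',\,2a+4+2\beta']$ with odd shifts ($\alpha'+\beta' = r-2$, $\alpha'\leq a-1,\ \beta'\leq b-1$) together with the boundary subsequence $[2a+2,\,2a+2+2r]$ (which skips $a_{2a+1}$, present when $r \leq b$). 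In both cases the flanking letters are $1,0$ so the left factor has the form $\Imot(1;\cdot;0)$; applying $\Iiii$ introduces the sign $(-1)^{2r+1} = -1$, and the reversed subword is identified as $(10)^{\beta}100(10)^{\alpha}$ with $\alpha = \alpha'+1,\ \beta = \beta'$ (or $\alpha = 0,\ \beta = r-1$ in the boundary case). The left factor is therefore $-\zetam(2^{\{\beta\}}32^{\{\alpha\}})$, and the combined index set is $\alpha \in \{0,\ldots,a\},\ \beta \in \{0,\ldots,b-1\}$ with $\alpha+\beta = r-1$, which is the second sum of $\xi^r_{a,b}$. \emph{Family 3} consists of the two subsequences $[2a+2-2r,\,2a+2]$ (present iff $r \leq a$) and $[2a+3,\,2a+3+2r]$ (present iff $r \leq b$); both have common subword equal to the palindrome $0(10)^r$. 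The former gives $\Imot(1;0(10)^r;0) = -\zetam_1(2^{\{r\}})$ after reflection, and the latter gives $\Imot(0;0(10)^r;1) = \zetam_1(2^{\{r\}})$ directly; together they produce the indicator term $(\mathbb{I}(b\geq r) - \mathbb{I}(a\geq r))\zetam_1(2^{\{r\}})$.

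The main obstacle is the careful bookkeeping in Family 2: one must verify that the full odd-shift straddles (after relabeling $\alpha = \alpha'+1,\ \beta = \beta'$) contribute precisely the terms with $\alpha \in \{1,\ldots,a\}$, that the boundary subsequence $[2a+2,\,2a+2+2r]$ contributes precisely the missing $\alpha = 0$ term, and that the asymmetry $\beta \leq b-1$ (as opposed to $\beta \leq b$ in Family 1) arises from parity together with the requirement $t \leq N$. Checking that the quotient sequence reduces to $(0;(10)^{a+b+1-r};1)$ in each contributing case then assembles the three families into $\pi(\xi^r_{a,b}) \otimes \zetam(2^{\{a+b+1-r\}})$.
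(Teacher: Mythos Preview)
Your argument is correct and follows essentially the same route as the paper's proof: both directly enumerate the length-$(2r+1)$ subsequences of $(0;(10)^a100(10)^b;1)$ that survive $\Io$, classify them into the same four shapes (your Families 1--3, with Family 2 corresponding to the paper's $\Imot(1;(01)^{\alpha}\oo1(01)^{\beta};0)$ treated uniformly rather than split into interior and $\alpha=0$ boundary cases), apply $\Iiii$ to the $(1;\cdot;0)$ terms, and observe that every quotient sequence is $(0;(10)^{a+b+1-r};1)$. Your explicit position-interval bookkeeping is more detailed than the paper's sketch, but the structure of the proof is identical.
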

\begin{proof}  The element $\zetam(w)$, where $w=2^{\{a\}}32^{\{b\}}$,  is represented by the sequence
$$\Imot(0;10\ldots 1{\oo}10 \ldots 10;1)\ .$$
By parity, every subsequence of length $2r+1$ which does not straddle the  subsequence ${\oo}$    begins and ends in the same symbol. Its motivic iterated integral vanishes by $\Io$,
so it does not contribute to $D_{2r+1}$.
 The remaining  subsequences are of the form 
 $$\Imot(0;\underbrace{10\ldots 10}_{\alpha}1\oo\underbrace{10 \ldots 10}_{\beta};1)=\zetam(2^{\{\alpha\}}32^{\{\beta\}})\ ,$$ 
 where $\alpha \leq a$, $\beta\leq b$, and $\alpha+\beta+1=r$, which gives rise to the first sum,
 or
 $$\Imot(1;\underbrace{01\ldots 01}_{\alpha}\oo1\underbrace{01 \ldots 01}_{\beta};0)=-\zetam(2^{\{\beta\}}32^{\{\alpha\}})\ ,$$ which gives  the second sum, by $\Iiii$. 
 In this case $\beta\neq b$. Finally, we can also have 
$$\Imot(1;\underbrace{01\ldots 01}_{r}\os;\os)=-\zetam_1(2^{\{r\}})\ , \hbox{ or } \Imot(\os; \os \underbrace{10\ldots 10}_{r};1)=\zetam_1(2^{\{r\}})\ ,$$
which gives rise to the last two terms. The quotient sequences are the same in all cases,  and equal to $\Imot(0;10\ldots 10;1)$. This  proves the formula. \end{proof}

The following trivial observation follows from lemma \ref{lemzetaelements}:
\begin{equation} \label{trivob}
D_{2r+1} \zetam(N) = \pi(\zetam(2r+1)) \otimes  \delta_{N,2r+1}  \qquad N\geq 2, r\geq 1 \ ,  
\end{equation} 
where $\delta_{i,j}$ denotes the Kronecker delta.

\begin{cor} \label{corlevel1} Let $w$ be a word in $\{2,3\}^{\times}$ of weight $2n+1$ which has many 2's and a single 3.  Then
there exist unique numbers $\alpha_i \in \Q$ such that
\begin{equation}\label{leveloneshape} 
 \zetam(w) = \sum_{i=1}^{n} \alpha_i\,  \zetam(2i+1) \, \zetam(2^{\{n-i\}}) \ . 
\end{equation}
\end{cor}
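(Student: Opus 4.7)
I would proceed by induction on $n$, using Theorem \ref{propkerD} (so that it suffices to kill the coaction $D_{<N}$ with $N=2n+1$) together with Lemma \ref{coprodlevel1}. The base case $n=1$ is trivial: the only admissible word is $w=3$ and we take $\alpha_1=1$. Assume the corollary holds at all weights $<2n+1$.

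First, I would analyse $D_{2r+1}\zetam(w)$ for $w=2^{\{a\}}32^{\{b\}}$ and $1\le r\le n-1$. By Lemma \ref{coprodlevel1} this equals $\pi(\xi^r_{a,b})\otimes\zetam(2^{\{n-r\}})$, where, after rewriting $\zetam_1(2^{\{r\}})$ via the shuffle formula $(\ref{12nshuffle})$, the element $\xi^r_{a,b}$ becomes a $\Q$-linear combination of elements $\zetam(2^{\{\alpha\}}32^{\{\beta\}})$ of weight $2r+1$. By the inductive hypothesis each such element has the form $\sum_{j=1}^{r}\gamma_j\,\zetam(2j+1)\zetam(2^{\{r-j\}})$. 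Since the map $\pi:\Ho_{>0}\to\Lo$ factors through $\Ao=\Ho/f_2\Ho$ and then quotients by products, it kills any product $\zetam(2j+1)\zetam(2^{\{r-j\}})$ with $r-j\ge 1$. Consequently $\pi(\xi^r_{a,b})=c_r\,\pi(\zetam(2r+1))$ for some $c_r\in\Q$ depending on $a,b,r$, and hence
\[ D_{2r+1}\zetam(w)=c_r\,\pi(\zetam(2r+1))\otimes\zetam(2^{\{n-r\}}),\qquad 1\le r\le n-1. \]

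Next I would form the candidate $Y=\sum_{i=1}^{n-1}c_i\,\zetam(2i+1)\zetam(2^{\{n-i\}})$ and compute $D_{2r+1}Y$ using that each $D_{2r+1}$ is a derivation $(\ref{Drarederivations})$. By the trivial observation $(\ref{trivob})$, $D_{2r+1}\zetam(2i+1)=\delta_{i,r}\pi(\zetam(2r+1))\otimes 1$; and since $\zetam(2^{\{k\}})$ is a rational multiple of $\zetam(2)^k=f_2^k$ (Lemma \ref{lemlev0}), while $\Delta f_2=1\otimes f_2$ forces $D_{2r+1}f_2=0$, we have $D_{2r+1}\zetam(2^{\{n-i\}})=0$. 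Therefore $D_{2r+1}Y=c_r\,\pi(\zetam(2r+1))\otimes\zetam(2^{\{n-r\}})=D_{2r+1}\zetam(w)$ for every $1\le r\le n-1$. Hence $\zetam(w)-Y\in\ker D_{<N}\cap\Ho_N$, and by Theorem \ref{propkerD} this kernel equals $\Q\,\zetam(2n+1)$. Setting $\alpha_i:=c_i$ for $i<n$ and $\alpha_n:=$ the resulting scalar yields the desired expansion.

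Uniqueness follows from the linear independence of the set $\{\zetam(2i+1)\zetam(2^{\{n-i\}}):1\le i\le n\}$ in $\Ho_N$: after applying the normalized isomorphism $\phi$ of $(\ref{phinormed})$, these elements correspond to scalar multiples of $f_{2i+1}f_2^{n-i}$ in $\U=\Q\langle f_3,f_5,\ldots\rangle\otimes_{\Q}\Q[f_2]$, and such monomials are visibly linearly independent. The main conceptual point of the argument is the observation that $\pi$ annihilates all products containing a positive power of $f_2$, which collapses the inductive formula for $\xi^r_{a,b}$ to a single rational multiple of $\pi(\zetam(2r+1))$; once this is in hand, the rest is a direct comparison via Theorem \ref{propkerD}.
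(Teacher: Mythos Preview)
Your proof is correct and follows essentially the same approach as the paper: induction on the weight, using Lemma~\ref{coprodlevel1} together with the inductive hypothesis to show that $\pi(\xi^r_{a,b})$ is a rational multiple of $\pi(\zetam(2r+1))$, and then applying Theorem~\ref{propkerD} to conclude. You have added a bit more detail than the paper (the explicit reason why $\pi$ kills the product terms, and the uniqueness argument via the isomorphism $\phi$), but the structure of the argument is the same.
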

\begin{proof} By induction on the weight of $w$. Suppose the result is true for $1\leq n< N$. Then for  a word $w=2^{\{a\}}32^{\{b\}}$ of weight $2N+1$, the elements $\xi^r_{a,b}$ of the previous lemma, for $1\leq r<N$,  are of the form $(\ref{leveloneshape})$. In particular, there exists some rational number $\alpha_r$ such that  $\xi^r_{a,b} \equiv \alpha_r\,  \zetam(2r+1)$ modulo products.
 It follows that for $1\leq r < N$, 
 $$D_{2r+1}\, \zetam(w) = \alpha_r \pi(\zetam(2r+1)) \otimes \zetam(2^{\{N-r\}})$$
and so the left and right-hand sides of
  $(\ref{leveloneshape})$ have the same image under $D_{<2N+1}$ by $(\ref{Drarederivations})$ and $(\ref{trivob})$.
By theorem \ref{propkerD} they differ by  a rational multiple of $\zetam(2N+1)$.
\end{proof}

The previous corollary leads us to the following definition.
\begin{defn} Let $w$ be a word in $\{2,3\}^{\times}$ of weight $2n+1$, with a single $3$. Define the \emph{coefficient} $c_w \in \Q$   of $\zetam(w)$
to be the coefficient of $\zetam(2n+1)$ in equation $(\ref{leveloneshape})$.
By $(\ref{12nshuffle})$, we can define  the coefficient $c_{12^n}$ of  $\zetam_1(2^{\{n\}})$ in the same way. It satisfies
\begin{equation} \label{c12id} c_{12^n}=-2 \sum_{i=0}^{n-1}  c_{2^{i}32^{n-i-1}}  \ .   \end{equation}
Thus for any $w \in \{2, 3\}^{\times}$ of weight $2n+1$ which contains a single 3,  we have:
\begin{equation} \label{pizetaw}
\pi(\zetam(w)) = c_{w} \pi(\zetam(2n+1) )  \ . 
\end{equation}
\end{defn}

Later on, we shall work not only with the actual coefficients $c_w\in \Q$, whose properties are described in the following section, but also with 
formal versions $C_w$ of these coefficients  (later to be specialized to $c_w$) purely  to simplify calculations.

\begin{lem} \label{lem010} For all $n\geq 1$,   the coefficient $c_{12^n}$ is equal to  $2(-1)^n$.  We have
$$
 \zetam_1(2^{\{n\}}) = 2 \sum_{i=1}^{n}  (-1)^i \zetam(2i+1) \zetam(2^{\{n-i\}})\ .
$$
\end{lem}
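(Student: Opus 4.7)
The plan is to prove both assertions of the lemma simultaneously by induction on $n$, combining the explicit derivation formula $(\ref{mainformula})$ with Theorem~\ref{propkerD} and one numerical input. The base case $n=1$ is immediate from $(\ref{12nshuffle})$: one has $\zetam_1(2) = -2\zetam(3)$, so $c_{12} = -2 = 2(-1)^1$.

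For the inductive step, the first task is to compute $D_{2r+1}\zetam_1(2^{\{n\}})$ for $1\le r\le n$ from the iterated-integral representation $\Imot(0;\,0,1,0,1,\ldots,1,0;\,1)$. In this sequence the letters satisfy $a_0=0$ and (for $k\ge 1$) $a_k=1$ iff $k$ is even. Consequently, any subsequence $(a_p;\,a_{p+1},\ldots,a_{p+2r+1};\,a_{p+2r+2})$ of odd length $2r+1$ with $p\ge 1$ has $a_p$ and $a_{p+2r+2}$ of the same parity, hence equal, and is killed by $\Io$. Only the term $p=0$ contributes, yielding
\[ D_{2r+1}\,\zetam_1(2^{\{n\}}) \;=\; \pi\bigl(\zetam_1(2^{\{r\}})\bigr)\otimes \zetam(2^{\{n-r\}}). \]
By the inductive hypothesis at each level $r<n$, combined with the fact that $\pi$ kills both products and the element $\zetam(2)$, only the pure-zeta term $j=r$ in the expansion of $\zetam_1(2^{\{r\}})$ survives projection, so $\pi(\zetam_1(2^{\{r\}})) = 2(-1)^r\pi(\zetam(2r+1))$. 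Setting
\[ X \;:=\; 2\sum_{i=1}^n (-1)^i \zetam(2i+1)\,\zetam(2^{\{n-i\}}), \]
the derivation property $(\ref{Drarederivations})$, formula $(\ref{trivob})$, and the vanishing of $D_{2r+1}$ on powers of $\zetam(2)$ (via Lemma~\ref{lemlev0}) together give $D_{2r+1}X = 2(-1)^r\pi(\zetam(2r+1))\otimes \zetam(2^{\{n-r\}})$ for $1\le r\le n$. Hence $D_{<2n+1}(\zetam_1(2^{\{n\}})-X)=0$, and Theorem~\ref{propkerD} produces $\mu\in\Q$ with $\zetam_1(2^{\{n\}}) - X = \mu\,\zetam(2n+1)$.

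It remains to prove $\mu=0$. Applying the period map via $(\ref{periodmapsigns})$ gives
\[ \mu\,\zeta(2n+1) \;=\; \zeta_1(2^{\{n\}}) - 2\sum_{i=1}^n (-1)^i \zeta(2i+1)\zeta(2^{\{n-i\}}), \]
so the vanishing of $\mu$ is equivalent to the numerical identity $\zeta_1(2^{\{n\}}) = 2\sum_{i=1}^n (-1)^i \zeta(2i+1)\zeta(2^{\{n-i\}})$, where $\zeta_1(2^{\{n\}}) = -2\sum_{i=0}^{n-1}\zeta(2^{\{i\}},3,2^{\{n-1-i\}})$ is the period of $(\ref{12nshuffle})$. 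This identity follows by summing Zagier's explicit closed-form evaluation of $\zeta(2^{\{a\}},3,2^{\{b\}})$ over the pairs $a+b=n-1$; since $\zeta(2n+1)\neq 0$ we conclude $\mu=0$, so $\zetam_1(2^{\{n\}})=X$, and reading off the coefficient of $\zetam(2n+1)$ yields $c_{12^n}=2(-1)^n$. The main obstacle is precisely this final arithmetic input: the motivic formalism constrains the identity only up to a rational multiple of $\zetam(2n+1)$, and pinning down that this multiple vanishes requires Zagier's classical numerical theorem (or an equivalent relation among regularised values).
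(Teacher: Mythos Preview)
Your proof is correct but follows a genuinely different route from the paper's. The paper proves the identity directly using the motivic stuffle product formula (citing \cite{Rac,Soud}): writing out $\zetam(2k+1)\zetam(2^{\{n-k\}})$ via stuffle for $k=1,\ldots,n-1$ and taking the alternating sum produces a telescoping cancellation that yields $\sum_i \zetam(2^{\{i\}}32^{\{n-1-i\}})$ on one side and the desired linear combination of $\zetam(2k+1)\zetam(2^{\{n-k\}})$ on the other; one then applies $(\ref{12nshuffle})$. No induction, no $D_{2r+1}$, no numerical input.

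Your approach instead computes $D_{2r+1}\zetam_1(2^{\{n\}})$ from the sequence $(0;0(10)^n;1)$ directly, uses induction and Theorem~\ref{propkerD} to pin the answer down modulo $\Q\,\zetam(2n+1)$, and then appeals to Zagier's evaluation (Theorem~\ref{thmZagierthm}) summed over $a+b=n-1$ to kill the remaining constant. This is exactly the ``lifting'' strategy the paper mentions as an alternative at the end of its proof. The trade-off: you avoid invoking the motivic stuffle relations, but at the cost of importing Zagier's theorem, which is a much deeper input than the elementary stuffle identity. Note also that the numerical identity you need, $\sum_{a+b=n-1}\zeta(2^{\{a\}}32^{\{b\}})=\sum_{i=1}^n(-1)^{i+1}\zeta(2i+1)\zeta(2^{\{n-i\}})$, follows already from the \emph{real} stuffle telescoping (the same computation the paper does motivically), so Zagier is overkill here; with that replacement your argument becomes self-contained and independent of \S\ref{sect4}. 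Since the paper later uses $c_{12^r}=2(-1)^r$ in the proof of Theorem~\ref{thmMotZagier}, it is also cleaner from a logical-ordering standpoint not to rely on Theorem~\ref{thmZagierthm} here.
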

\begin{proof}
Granting the motivic stuffle product formula  \cite{Rac,Soud} for our version of  motivic multiple zeta values  in which $\zetam(2)$ is non zero, we have:
 \begin{eqnarray}
 \zetam(3)\zetam(2^{\{n-1\}})  &=  &\sum_{i=0}^{n-1} \zetam(2^{\{i\}}32^{\{n-1-i\}}) + \sum_{i=0}^{n-2} \zetam(2^{\{i\}} 5 2^{\{n-2-i\}}) \nonumber \\
 \zetam(5)\zetam(2^{\{n-2\}})  &=  &\sum_{i=0}^{n-2} \zetam(2^{\{i\}}52^{\{n-2-i\}}) + \sum_{i=0}^{n-3} \zetam(2^{\{i\}} 7 2^{\{n-3-i\}}) \nonumber \\
 \vdots \qquad \qquad & \vdots & \qquad \qquad \qquad\qquad \qquad \vdots \nonumber \\
 \zetam(2n-1)\zetam(2) & = & \zetam(2n-1,2)+\zetam(2,2n-1) + \zetam(2n+1) \ .\nonumber
 \end{eqnarray} 
  Taking the alternating sum of each row  gives the equation: 
 $$\sum_{i=0}^{n-1} \zetam(2^{\{i\}}32^{\{n-1-i\}}) = -2 \sum_{i=1}^{n}  (-1)^i \zetam(2i+1) \zetam(2^{\{n-i\}})  \ .$$
This implies the lemma, by $(\ref{12nshuffle})$.
  Alternatively,  we can   use the general  method for lifting relations from real multiple zeta values to their motivic versions given in  \cite{Br}. For this, it  only suffices to consider
 the above relations modulo products and modulo $\zetam(2)$ to obtain the coefficients of $\zetam(2n+1)$, which leads to  the same result.
 \end{proof}

\section{Arithmetic of the coefficients $c_w$} \label{sect4}
The key arithmetic input  is an evaluation of certain multiple zeta values  which is due to Don Zagier.
Using the operators $D_{<N}$ we shall lift this result to motivic multiple zetas.
 First, let us define for any $a,b,r\in \N$, 
$$A^r_{a,b} =  \binom{2r}{2a+2} \  \hbox{ and  } \ B^r_{a,b} =\bigl(1-2^{-2r}\bigr)\binom{2r}{2b+1}\ . $$
Note that $A^r_{a,b}$ (respectively $B^r_{a,b}$) only depends on $r,a$ (resp. $r,b$).

\begin{thm} \label{thmZagierthm} (Don Zagier \cite{Zagier}).  Let $a,b\geq 0$. Then 
$$\zeta(2^{\{a\}}32^{\{b\}})=  2\,\sum_{r=1}^{a+b+1}(-1)^r (A^r_{a,b}-B^r_{a,b})\, \zeta(2r+1)\, \zeta(2^{\{a+b+1-r\}})\,\;. $$
\end{thm}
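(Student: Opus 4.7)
The natural approach is to prove the identity by comparing two-variable generating functions. Introduce
$$F(x,y) = \sum_{a,b \geq 0} (-1)^{a+b}\, \zeta(2^{\{a\}},3,2^{\{b\}})\, x^{2a+2} y^{2b+1},$$
and let $G(x,y)$ denote the analogous generating function built from the right-hand side of Zagier's formula. The plan is to evaluate $F$ and $G$ independently in closed form and to identify the two expressions.

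For $F$, express each MZV as an iterated integral against the word $(\omega_1\omega_0)^a\,\omega_1\omega_0^2\,(\omega_1\omega_0)^b$, where $\omega_0 = dt/t$ and $\omega_1 = dt/(1-t)$, over the standard simplex. Summing over $a$ and $b$ under the integral sign collapses the alternating tails of $2$'s into elementary rational kernels of the form $1/(1 - x^2 u^2)$ and $1/(1 - y^2 v^2)$ attached to the two break points flanking the central block $\omega_0^2$. Evaluating the remaining double integral via the classical partial-fraction identity
$$\sum_{n \geq 1}\frac{1}{n^2 - z^2} = \frac{1 - \pi z \cot(\pi z)}{2 z^2}$$
gives $F(x,y)$ in closed form as a combination of cotangent and logarithmic factors in $x$, $y$, $x+y$ and $x-y$.

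For $G$, use Euler's evaluation $\sum_{n \geq 0}\zeta(2^{\{n\}}) t^{2n} = \sinh(\pi t)/(\pi t)$ to sum over the factor $\zeta(2^{\{a+b+1-r\}})$, and unpack the binomial coefficients in $A^r_{a,b}$ and $B^r_{a,b}$ via the binomial theorem applied to $(x\pm y)^{2r}$ together with their half-argument analogues. The combination $A^r - B^r$ is exactly what enforces the half-argument substitution, with the prefactor $(1 - 2^{-2r})$ reflecting a Dirichlet-eta-type evaluation. Summing the resulting series on $r$ against the generating series of the odd zeta values $\zeta(2r+1)$ then yields a closed form for $G(x,y)$ of the same analytic shape as $F(x,y)$, involving products and quotients of $\sin$, $\sinh$ and $\cot$.

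The principal obstacle is the explicit identification $F = G$. Both sides are meromorphic in $(x,y)$, holomorphic near the origin, with poles confined to the integer lattice coming from the cotangent factors. The cleanest verification is to compare residues along those poles and the values at the origin on both sides, and then invoke a Liouville-type uniqueness argument using the controlled growth in $x,y$. A useful preliminary sanity check is the specialization $b = 0$, which recovers Euler's known decomposition of $\zeta(2^{\{a\}},3)$ in terms of $\zeta(2r+1)\zeta(2^{\{a+1-r\}})$; agreement there constrains the precise combination $A^r - B^r$ and provides strong evidence before tackling the two-variable case.
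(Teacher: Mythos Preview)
The paper does not prove this theorem. It is stated with attribution to Zagier and used as external arithmetic input; no argument is given in the paper itself. So there is no in-paper proof to compare your proposal against.

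As for your proposal on its own merits: the generating-function strategy you describe is indeed the route Zagier takes, so the overall shape is correct. But what you have written is a plan, not a proof. You assert that $F(x,y)$ collapses to an explicit closed form in cotangent and logarithmic factors, and that $G(x,y)$ does likewise, without carrying out either computation; and you correctly flag the identification $F=G$ as ``the principal obstacle'' but then only gesture at a residue-plus-Liouville argument. In Zagier's actual proof the closed form for $F$ is obtained from the \emph{series} (not iterated-integral) representation of the multiple zeta value, via partial fractions and the digamma reflection formula, and the matching with $G$ comes down to a nontrivial trigonometric identity among products and ratios of $\sin(\pi x)$, $\sin(\pi y)$, $\sin(\pi(x\pm y)/2)$ that requires genuine work to establish. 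Your outline is compatible with that route, but all of the substantive steps remain to be done; in particular the Liouville-type argument you propose for $F=G$ would need the precise closed forms in hand, and you have not produced them.
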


 \subsection{Motivic version}  To prove that the arithmetic formula lifts to the level of motivic multiple zeta values  requires showing that it is compatible with the coaction $(\ref{fulldelta})$. This  
is equivalent to the following compatibility condition between the coefficients.
 
 \begin{lem}\label{lemcollapse}
For any $a,b\geq0$,  and $1\leq r\leq a+b+1$ we have
\begin{eqnarray} 
 \sum_{{\alpha\leq a \atop \beta \leq b}}  A^{r}_{\alpha,\beta}   - \sum_{\alpha \leq a \atop \beta< b} A^{r}_{\beta,\alpha} +    \mathbb{I}(b\geq r) -  \mathbb{I}(a\geq r)  & = & A^{r}_{a,b} \nonumber \\
  \sum_{{\alpha\leq a \atop \beta \leq b}}  B^{r}_{\alpha,\beta}   - \sum_{\alpha \leq a \atop \beta< b} B^{r}_{\beta,\alpha}   & = & B^{r}_{a,b} \nonumber
\end{eqnarray}
where all sums are over sets of indices $\alpha,\beta\geq0$ satisfying $\alpha+\beta+1=r$.
\end{lem}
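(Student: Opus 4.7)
The key structural observation is that, thanks to the constraint $\alpha+\beta+1=r$, each binomial coefficient in the lemma depends on only one of the summation indices: $A^r_{\alpha,\beta}=\binom{2r}{2\alpha+2}$ is independent of $\beta$, and $B^r_{\alpha,\beta}=(1-2^{-2r})\binom{2r}{2\beta+1}$ is independent of $\alpha$. Substituting $\beta=r-1-\alpha$ everywhere, and using the palindrome symmetry $\binom{2r}{k}=\binom{2r}{2r-k}$ together with the involution $\alpha\mapsto r-1-\alpha$ applied to the second sum where needed, both identities reduce to comparisons of sums of $\binom{2r}{2\alpha+2}$ (for the $A$-identity) or $\binom{2r}{2\alpha+1}$ (for the $B$-identity) taken over explicit $\alpha$-intervals with endpoints of the form $\max(0,r-1-b)$, $\min(a,r-1)$ and their analogues with $a,b$ swapped.

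For the $B$-identity, the two resulting intervals share their upper endpoint $\min(a,r-1)$ and differ at the lower end by at most one integer, namely $\alpha=r-1-b$. If $b<r$, that single missing term contributes precisely $\binom{2r}{2(r-1-b)+1}=\binom{2r}{2b+1}$ by palindrome symmetry, which matches the right-hand side after restoring the factor $(1-2^{-2r})$; if $b\geq r$ the intervals coincide and $\binom{2r}{2b+1}=0$, so the identity is trivial.

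For the $A$-identity, I would split into the four cases determined by whether $a,b$ are $<r$ or $\geq r$, bearing in mind the hypothesis $r\leq a+b+1$. In every case the difference of the two sums telescopes (since $\binom{2r}{2\alpha+2}$ is $\binom{2r}{2(\alpha+1)}$, a one-step shift) to $\binom{2r}{2a+2}\pm\binom{2r}{0}$, and the leftover $\pm 1$ is cancelled exactly by the indicator term $\mathbb{I}(b\geq r)-\mathbb{I}(a\geq r)$, leaving $\binom{2r}{2a+2}$ on the nose. One uses throughout that $\binom{2r}{2a+2}$ vanishes whenever $a\geq r$, which is consistent with both sides vanishing in those regimes.

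The main obstacle is purely combinatorial bookkeeping: verifying the identity in degenerate configurations such as $r=a+b+1$ (one sum empty), $b=0$ (second sum empty), or the boundary $a=r-1$, so that the case analysis captures all terms correctly. Beyond this careful enumeration of cases, the argument requires only elementary manipulation of binomial coefficients.
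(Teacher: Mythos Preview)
Your proposal is correct and follows essentially the same approach as the paper. The paper's proof is left as an exercise with the hint to use the symmetries $A^{\alpha+\beta+1}_{\alpha,\beta} = A^{\alpha+\beta+1}_{\beta-1,\alpha+1}$ and $B^{\alpha+\beta+1}_{\alpha,\beta}=B^{\alpha+\beta+1}_{\beta,\alpha}$; these are nothing other than the palindrome identity $\binom{2r}{k}=\binom{2r}{2r-k}$ combined with the substitution $\alpha\leftrightarrow r-1-\alpha$, which is exactly what you use. Your reduction to comparing sums of $\binom{2r}{2\alpha+2}$ or $\binom{2r}{2\alpha+1}$ over explicit $\alpha$-intervals, followed by the four-case check for the $A$-identity, is a clean way to carry out the bookkeeping the paper omits.
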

\begin{proof} Exercise, using  
 $A^{\alpha+\beta+1}_{\alpha,\beta} = A^{\alpha+\beta+1}_{\beta-1,\alpha+1}$   and $B^{\alpha+\beta+1}_{\alpha,\beta}=B^{\alpha+\beta+1}_{\beta,\alpha}$.
 \end{proof}
We now show that Zagier's theorem lifts to  motivic multiple zetas.
\begin{thm} \label{thmMotZagier} Let $a,b\geq 0$. Then 
\begin{equation} \label{zl1motform} \zetam(2^{\{a\}}32^{\{b\}})=  2\,\sum_{r=1}^{a+b+1}(-1)^r (A^r_{a,b}-B^r_{a,b})\,  \zetam(2r+1)\, \zetam(2^{\{a+b+1-r\}})\;. 
\end{equation}
In particular, if $w=2^{\{a\}}32^{\{b\}}$, then the coefficient $c_w$ is given by
\begin{equation}\label{cwformula}
c_w =   2\, (-1)^{a+b+1} \big( A^{a+b+1}_{a,b}-B^{a+b+1}_{a,b}\big)\;. 
\end{equation}
\end{thm}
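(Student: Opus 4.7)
My strategy is to prove the motivic identity $(\ref{zl1motform})$ by induction on $n = a+b+1$; formula $(\ref{cwformula})$ for $c_w$ then follows immediately, since by definition $c_w$ is the coefficient of $\zetam(2n+1) = \zetam(2n+1)\zetam(2^{\{0\}})$ in $(\ref{leveloneshape})$, which is precisely the $r=n$ term on the right-hand side of $(\ref{zl1motform})$. The base case $n=1$ (i.e.\ $a=b=0$) is a direct check using $A^1_{0,0}=1$ and $B^1_{0,0}=3/2$. For the inductive step, I will show that the difference
\begin{equation*}
\rho_{a,b} := \zetam(2^{\{a\}}32^{\{b\}}) - 2\sum_{r=1}^{n}(-1)^r\bigl(A^r_{a,b}-B^r_{a,b}\bigr)\,\zetam(2r+1)\,\zetam(2^{\{n-r\}})
\end{equation*}
lies in $\ker D_{<2n+1}\cap\Ho_{2n+1}$. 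By theorem \ref{propkerD}, this kernel is one-dimensional and spanned by $\zetam(2n+1)$, so $\rho_{a,b}=\lambda\,\zetam(2n+1)$ for some $\lambda\in\Q$. Applying the period map using $(\ref{periodmapsigns})$ and invoking Zagier's theorem \ref{thmZagierthm} gives $\lambda\,\zeta(2n+1)=0$, and hence $\lambda=0$ since $\zeta(2n+1)\ne 0$.

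The heart of the argument is therefore to verify $D_{2r+1}\,\rho_{a,b}=0$ for every $1\le r\le a+b=n-1$. For the first summand, lemma \ref{coprodlevel1} gives directly
\begin{equation*}
D_{2r+1}\,\zetam(2^{\{a\}}32^{\{b\}}) = \pi(\xi^r_{a,b})\otimes\zetam(2^{\{n-r\}}).
\end{equation*}
For the second (explicit) summand I will use that $D_{2r+1}$ is a derivation $(\ref{Drarederivations})$, that $\zetam(2^{\{n-s\}})$ is annihilated by every $D_{2r+1}$ (it is proportional to $\zetam(2)^{n-s}$ by lemma \ref{lemlev0} and $\Delta\zetam(2)=1\otimes\zetam(2)$), and that $D_{2r+1}\zetam(2s+1)$ picks out the Kronecker $\delta_{s,r}$ by $(\ref{trivob})$. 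Only the $s=r$ term survives, so the $D_{2r+1}$-image of the second summand equals
\begin{equation*}
2(-1)^r(A^r_{a,b}-B^r_{a,b})\,\pi(\zetam(2r+1))\otimes\zetam(2^{\{n-r\}}).
\end{equation*}
It therefore suffices to check the identity $\pi(\xi^r_{a,b}) = 2(-1)^r(A^r_{a,b}-B^r_{a,b})\,\pi(\zetam(2r+1))$ in $\Lo_{2r+1}$.

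This is precisely where the inductive hypothesis enters: each $\zetam(2^{\{\alpha\}}32^{\{\beta\}})$ appearing in $\xi^r_{a,b}$ has weight $2r+1<2n+1$, so by $(\ref{pizetaw})$ and the IH its image under $\pi$ equals $2(-1)^r(A^r_{\alpha,\beta}-B^r_{\alpha,\beta})\pi(\zetam(2r+1))$, while $\pi(\zetam_1(2^{\{r\}}))=2(-1)^r\pi(\zetam(2r+1))$ by lemma \ref{lem010}. Substituting these expressions into the formula for $\xi^r_{a,b}$ of lemma \ref{coprodlevel1} and factoring out $2(-1)^r\pi(\zetam(2r+1))$, the required identity reduces to exactly the two combinatorial identities proved in lemma \ref{lemcollapse}. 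In summary, the entire motivic lift rests on two inputs: (i) Zagier's arithmetic identity, used only through the period map to pin down a single scalar, and (ii) the combinatorial lemma \ref{lemcollapse}, which is the precise compatibility needed for the coefficients $A^r$, $B^r$ to respect the infinitesimal coaction. I expect no genuine conceptual obstacle; the main task is the careful bookkeeping of the four kinds of subsequence contributions in $\xi^r_{a,b}$ and checking that, after applying the IH, they recombine exactly as lemma \ref{lemcollapse} dictates.
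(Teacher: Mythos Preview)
Your proposal is correct and follows essentially the same route as the paper's own proof: induction on $n=a+b+1$, computing $D_{2r+1}$ of both sides via lemma \ref{coprodlevel1}, the derivation property, lemma \ref{lem010} and the induction hypothesis, collapsing via lemma \ref{lemcollapse}, applying theorem \ref{propkerD} to the difference, and finally pinning down the remaining scalar with the period map and Zagier's theorem \ref{thmZagierthm}. The only cosmetic difference is that you phrase the last step as $\lambda=0$ (with the full sum to $r=n$ subtracted), whereas the paper phrases it as identifying $\alpha$ with the $r=a+b+1$ coefficient; these are equivalent.
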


\begin{proof} The proof is by induction on the weight. Suppose that $(\ref{zl1motform})$ holds for all $a+b <N$ and let $a,b\geq 0$ such that $a+b=N$. Then  by lemma \ref{coprodlevel1},
$$ D_{2r+1} \, \zetam(2^{\{a\}}32^{\{b\}}) =  \pi(\xi^r_{a,b})  \otimes \zetam(2^{\{a+b+1-r\}}) \ ,$$
for $1\leq r\leq N$. 
The second formula in lemma \ref{coprodlevel1} and $(\ref{pizetaw})$ implies that 
$$\pi(\xi^r_{a,b})  =  \Big( \sum_{{\alpha\leq a \atop \beta \leq b}} c_{2^\alpha 32^\beta}  - \sum_{\alpha \leq a \atop \beta< b}c_{2^{\beta}32^{\alpha}} +  c_{12^r} \mathbb{I}(b\geq r) -  c_{12^r} \mathbb{I}(a\geq r)  \Big)  \pi(\zetam(2r+1))\ , $$
where   $1\leq r\leq N $ and   the sum is over $\alpha, \beta\geq 0$ satisfying  $\alpha+ \beta+1=r$. 
By induction hypothesis, and
the fact that $c_{12^r}=2(-1)^r$,  the term in brackets is 
$$ 
2 (-1)^r\Big[ \sum_{{\alpha\leq a \atop \beta \leq b}} \Big(A^{r}_{\alpha,\beta} -B^{r}_{\alpha,\beta}\Big)  - \sum_{\alpha \leq a \atop \beta< b}\Big(A^{r}_{\beta,\alpha}-B^{r}_{\beta,\alpha}\Big) +   \mathbb{I}(b\geq r) -  \mathbb{I}(a\geq r) \Big] \ ,$$
where all $\alpha+\beta+1=r$. By lemma \ref{lemcollapse}, this  collapses to
 $$ 2 (-1)^r \big(  A^{r}_{a,b}- B^{r}_{a,b}  \big)\ .  $$
 Putting the previous expressions together, we have shown that 
 $$ D_{2r+1} \, \zetam(2^{\{a\}}32^{\{b\}}) =    2(-1)^r \big(  A^{r}_{a,b}- B^{r}_{a,b}  \big)   \pi(\zetam(2r+1)) \otimes \zetam(2^{\{a+b+1-r\}})\ . $$
It follows by $(\ref{Drarederivations})$ and  $(\ref{trivob})$ that the difference 
$$\Theta=\zetam(2^{\{a\}}32^{\{b\}})-  2\, \sum_{r=1}^{a+b+1} (-1)^r (A^r_{a,b}-B^r_{a,b})\, \zetam(2r+1)\,  \zetam(2^{\{a+b+1-r\}})  $$
satisfies $D_{2r+1}\Theta=0$ for all $r\leq a+b$.  By theorem \ref{propkerD}, there is an $\alpha\in \Q$ such that
$$\Theta = \alpha \, \zetam(2a+2b+3)\ .$$
Taking the period map implies  an analogous relation for the ordinary multiple zeta values. By theorem \ref{thmZagierthm}, the constant $\alpha$ is $2(-1)^{a+b+1}(A^{a+b+1}_{a,b}-B^{a+b+1}_{a,b})$, which completes the induction step, and hence the proof of the theorem.
\end{proof}
\subsection{$2$-adic properties of $c_w$} The coefficients $c_w$ satisfy some arithmetic properties which are crucial for the sequel, and follow  immediately from theorem  \ref{thmMotZagier}. 
 
 Let $p$ be a prime number. Recall that for any rational number $x\in \Q^{\times}$, its  $p$-adic valuation $v_p(x)$ is the integer $n$ such that
$x=p^n {a \over b}$, where $a,b$ are relatively prime to $p$. We set $v_p(0)=\infty$. For any word $w\in \{2,3\}^\times$, let $\widetilde{w}$ denote the same word but written in reverse order.

\begin{cor} \label{cor2adic}  Let $w$ be any word of the form $2^a32^b$ of weight $2a+2b+3 = 2n+1$.  
It is obvious from formula $(\ref{cwformula})$  that $c_w \in \Z[{1\over 2}]$. 
Furthermore, the 
  $c_w$ satisfy:
  
  \begin{enumerate}
  \item  $ c_{w} - c_{\widetilde{w}} \in 2 \, \Z$\ ,
  \item $ v_2(c_{32^{a+b}}) \leq v_2(c_{w})\leq 0 $\ .
\end{enumerate}
 \end{cor}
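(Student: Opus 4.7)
The plan is to work directly from the explicit formula of Theorem \ref{thmMotZagier}. First I would rewrite it, with $n = a+b+1$, as
\begin{equation*}
c_w = 2(-1)^n \binom{2n}{2a+2} - 2(-1)^n \binom{2n}{2b+1} + (-1)^n 2^{1-2n}\binom{2n}{2b+1},
\end{equation*}
so the first two summands lie in $2\Z$ and the entire $2$-adic denominator of $c_w$ is carried by the third term.

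For property (1), I would use the symmetry $\binom{2n}{k} = \binom{2n}{2n-k}$: since $n = a+b+1$, one has $\binom{2n}{2a+1} = \binom{2n}{2b+1}$, so the third term is invariant under $w \leftrightarrow \widetilde w$. Subtracting $c_{\widetilde w}$ from $c_w$ therefore kills the problematic $2^{1-2n}$-contribution and leaves
\begin{equation*}
c_w - c_{\widetilde w} = 2(-1)^n\left[\binom{2n}{2a+2} - \binom{2n}{2b+2}\right] \in 2\Z.
\end{equation*}

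For property (2), I would compute $v_2$ of the third term via Legendre's formula $v_2(m!) = m - s_2(m)$, obtaining
\begin{equation*}
v_2\!\left(\binom{2n}{2b+1}\right) = s_2(a)+s_2(b)+2-s_2(n).
\end{equation*}
Combining the elementary carry inequality $s_2(a)+s_2(b) \geq s_2(a+b)$ with the identity $s_2(n-1) = s_2(n)+v_2(n)-1$ then produces the lower bound $v_2(\binom{2n}{2b+1}) \geq 1 + v_2(n)$, attained exactly when $\{a,b\} = \{0,n-1\}$. The upper bound $v_2(\binom{2n}{2b+1}) \leq 2n-1$ is immediate from $\binom{2n}{2b+1} < 2^{2n}$. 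Since the first two summands in the decomposition of $c_w$ have $v_2 \geq 1$, whereas the third has $v_2 \leq 0$, there is no $2$-adic cancellation, and hence
\begin{equation*}
v_2(c_w) = 1 - 2n + v_2\!\left(\binom{2n}{2b+1}\right),
\end{equation*}
from which the bounds $v_2(c_{32^{n-1}}) = 2 - 2n + v_2(n) \leq v_2(c_w) \leq 0$ follow at once.

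The main technical point, and really the only potential pitfall, is ensuring the absence of $2$-adic cancellation between the three terms in the decomposition of $c_w$; this is handled by the strict separation of valuations displayed above, which is ultimately where Legendre's formula does the real work. Everything else is bookkeeping around Zagier's explicit formula.
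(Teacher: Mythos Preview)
Your argument is correct and follows essentially the same route as the paper: both isolate the $2\,(-1)^n A^n_{a,b}$ term as an even integer, use the symmetry $\binom{2n}{2b+1}=\binom{2n}{2a+1}$ for part~(1), and reduce part~(2) to estimating $v_2\bigl(\binom{2n}{2b+1}\bigr)$; the paper does this via the factorization $\binom{2n}{2b+1}=\tfrac{2n}{2b+1}\binom{2n-1}{2b}$, while you do it via Legendre's formula and Kummer's carry inequality, which amounts to the same computation. One small inaccuracy: the minimum $v_2\bigl(\binom{2n}{2b+1}\bigr)=1+v_2(n)$ is \emph{not} attained only when $\{a,b\}=\{0,n-1\}$ (it holds whenever $a+b$ has no binary carries, e.g.\ $a=1,b=2$), but this is harmless since you only need that it \emph{is} attained at $b=0$.
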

 \begin{proof}  Property $(1)$ follows from the symmetry $B^{a+b+1}_{a,b}=B^{a+b+1}_{b,a}$. Indeed,
 $$c_{2^a32^b}-c_{2^b32^a} =  \pm 2 (A^{a+b+1}_{a,b}-A^{a+b+1}_{b,a})\in 2\,  \Z\ .$$
Let $n=a+b+1$ be fixed. Clearly,  $v_2(c_{2^a32^b}) = v_2 ( 2\times 2^{-2n}\times  \binom{2n}{2b+1})$, and so
 $$v_2(c_{2^a32^b}) = 1-2n+ v_2\big(\textstyle{\binom{2n}{2b+1}}\big)\ .$$
 Writing $\binom{2n}{2b+1} = {2n \over 2b+1} \binom{2n-1}{2b}$, we obtain
 $$v_2(c_{2^a32^b}) = 2-2n+v_2(n) + v_2\big( \textstyle{\binom{2n-1}{2b}}\big) $$
which is  $\leq 0$, and furthermore, $v_2\big( \textstyle{\binom{2n-1}{2b}}\big)$ is minimal when $b=0$.  This proves $(2)$.
 \end{proof}

\section{The level filtration and $\partial_{N,\ell}$ operators}

\begin{defn}
Let $\Ho^{2,3}\subset \Ho$ denote the $\Q$-subspace  spanned by
\begin{equation}\label{Ho23gen}
\zetam(w) \quad  \hbox{ where } \quad  w\in \{2,3\}^{\times}\ .
\end{equation}
It inherits the weight grading from $\Ho$. The weight of  $\zetam(w)$ is $2\deg_2 w+ 3\deg_3 w$.
\end{defn}

\begin{defn}Consider the unique map \begin{equation} \rho: \{2,3\}^{\times} \To \{0,1\}^{\times}
\end{equation} 
 such that $\rho(2)=10$ and $\rho(3)=100$, which respects the concatenation product.
The motivic iterated integral which corresponds to $\zetam(w)$ is $\Imot(0;\rho(w);1)$.
\end{defn}

\begin{lem} The coaction $(\ref{fulldelta})$   gives a map
$$\Delta: \Ho^{2,3} \To \mathcal{A} \otimes_{\Q} \Ho^{2,3}\ .$$
\end{lem}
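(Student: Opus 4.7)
The goal is to show $\Delta\,\zetam(w)\in\Ao\otimes_{\Q}\Ho^{2,3}$ for every $w\in\{2,3\}^{\times}$. My plan is to feed $\zetam(w)=\Imot(0;\rho(w);1)$ into the Goncharov formula from Theorem~\ref{thmGonchCoproduct}. With $\rho(w)=a_1\cdots a_N$, that formula expresses $\Delta\,\zetam(w)$ as a sum over $0=i_0<i_1<\cdots<i_k<i_{k+1}=N+1$ with right-hand factor $\Imot(0;a_{i_1},\tdots,a_{i_k};1)$; so the problem reduces to showing that for every subword $\tau=a_{i_1}\cdots a_{i_k}$ appearing with nonzero coefficient, $\Imot(0;\tau;1)\in\Ho^{2,3}$. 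A first use of $\Io$ eliminates configurations with any gap $i_{p+1}-i_p=2$, since such a middle factor has length-one middle and vanishes; only gaps equal to $1$ or $\geq 3$ survive, and the $\geq 3$ gaps additionally require distinct endpoints $a_{i_p}\neq a_{i_{p+1}}$ by $\Io$.

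The heart of the argument is then a reduction of each surviving $\Imot(0;\tau;1)$ to a $\Q$-linear combination of $\zetam(w')$ with $w'\in\{2,3\}^{\times}$, carried out by induction on $|\tau|$ using $\Io$--$\Iiii$, the motivic shuffle product, and Lemma~\ref{lemzetaelements} (Euler's relation $\zetam(2n)=b_n\zetam(2)^n$ together with $\zetam(1)=0$, which follows from $\Io$). Empty or constant $\tau$ are trivial. Words starting with $0$ are flattened via $\Iii$; those ending with $1$ via $\Iiii$. After such reductions, $\tau$ has the canonical shape $10^{c_1}10^{c_2}\cdots 10^{c_s}$; if every $c_j\in\{1,2\}$ one is done with $\tau=\rho(w')$, while consecutive $1$'s ($c_j=0$) can be expanded using the shuffle identity against $\zetam(1)=0$, in the spirit of $0=\zetam(2)\cdot\zetam(1)=2\zetam(1,2)+\zetam(2,1)$, which combined with $\zetam(2,1)=-2\zetam(3)$ (from $\Iiii$ and $\Iii$) yields $\zetam(1,2)=\zetam(3)$.

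The main obstacle — and where the lemma has real content — is to verify that the reduction \emph{always} lands in $\Ho^{2,3}$ rather than producing $\zetam(n_1,\ldots,n_r)$ with some $n_i\notin\{2,3\}$. A priori this looks dangerously close to Hoffman's conjecture itself. The resolution is that $\tau$ is not an arbitrary binary word: the combinatorial structure of $\rho(w)\in\{10,100\}^{\times}$, together with the gap restrictions of the previous paragraph, forces $\tau$ into a sufficiently controlled shape that any residual $\zetam(2k)$ is absorbed by Euler's motivic relation, and any residual $\zetam(\ldots,1,\ldots)$ collapses using $\zetam(1)=0$ against the shuffle product. Carrying out this combinatorial bookkeeping for all admissible configurations $(i_p)$ is where the bulk of the work lies.
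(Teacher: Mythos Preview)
You set things up correctly and even isolate the key constraint from $\Io$: in any nonvanishing term of Goncharov's formula, consecutive retained indices either have gap $1$ or satisfy $a_{i_p}\neq a_{i_{p+1}}$. But you then abandon this and embark on an elaborate, unfinished reduction using shuffle, $\Iiii$, and Euler's relation, which you yourself flag as ``dangerously close to Hoffman's conjecture''. That worry is the signal that you are on the wrong track: the paper's proof is one sentence drawn directly from the constraint you already wrote down.

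Here is the step you are missing. A nonempty binary word lies in $\{10,100\}^{\times}$ if and only if it starts with $1$, ends with $0$, and contains neither $11$ nor $000$ as a substring. Now apply your constraint to the quotient word $\tau=a_{i_1}\cdots a_{i_k}$, together with the boundary values $a_{i_0}=0$ and $a_{i_{k+1}}=1$. If $a_{i_p}=a_{i_{p+1}}$ for some $0\leq p\leq k$, then by your constraint the gap equals $1$, so $a_{i_p}a_{i_{p+1}}$ is a length-two substring of $0\,\rho(w)\,1$; since that word contains no $11$, neither does the quotient. If $a_{i_p}=a_{i_{p+1}}=a_{i_{p+2}}=0$, both gaps equal $1$ and $000$ would be a substring of $0\,\rho(w)\,1$, again impossible. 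Finally $a_{i_0}=0$ forces $a_{i_1}=1$ and $a_{i_{k+1}}=1$ forces $a_{i_k}=0$, by the same dichotomy. Hence $\tau=\rho(w')$ for some $w'\in\{2,3\}^{\times}$, and $\Imot(0;\tau;1)=\zetam(w')\in\Ho^{2,3}$ on the nose --- no rewriting is required.

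So no shuffle identities, no Euler relation, no $\zetam(1)=0$, and no inductive ``bookkeeping'' are needed; your second and third paragraphs should be discarded. As written, your proposal is incomplete (you defer ``the bulk of the work'' without doing it), and the route you sketch is both unnecessary and, as you suspected, would genuinely risk circularity with the main theorem if $\tau$ were an arbitrary binary word.
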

\begin{proof}
This follows from $(\ref{defGcoproduct})$ together with  the fact that  $\Imot$ vanishes on sequences which begin and end in the same symbol ($\Io$). Thus, for  $w\in \{2,3\}^{\times}$,
every  quotient sequence of $(0;\rho(w);1)$ which occurs non-trivially on the right-hand side of  the coaction   is  again of the form $(0;w';1)$,  where $w'$ is   a word in $10$ and $100$.
\end{proof}

\subsection{ The level filtration}

\begin{defn} Let $w\in \{2,3\}^{\times}$ be a word in the letters $2$ and $3$. We define  the  \emph{level} of $w$ to be $\deg_3 w$, the number of occurrences of the letter `3' in $w$. 
Denote the induced increasing filtration on $\Ho^{2,3}$ by  $F_{\bullet}$, i.e., $F_{\ell}\Ho^{2,3}$ is the $\Q$-vector space spanned by
 $$ \{ \zetam(w): w\in \{2,3\}^\times \hbox{ such that }  \deg_3 w\leq \ell\} \ .$$ 
\end{defn}

The level  counts the number of occurrences  of the sequence  `$00$' in $\rho(w)$.
Alternatively, it is  given by the weight minus twice the depth (number of $1$'s) in $\rho(w)$. Thus the level filtration   takes even (resp. odd) values in even (resp. odd) weights. 
 The level filtration  is motivic in the following sense:
$$\Delta \big( F_{\ell} \Ho^{2,3} \big)  \subseteq  \mathcal{A}\otimes_{\Q} F_{\ell}\Ho^{2,3} \ .$$
This is because any sequence of $0$'s and $1$'s contains  at least as many   `$00$'s  than any of   its quotient sequences.
 It follows that  the maps $D_{2r+1} :\Ho^{2,3} \rightarrow \Lo_{2r+1}\otimes_{\Q} \Ho^{2,3}$, where $\Lo$ is the Lie coalgebra of $\Ao=\Ho/f_2\Ho$, also preserve the level filtration. In fact, more is true.
 \begin{lem}  \label{lemdroplevel} For all $r\geq 1,$ 
$$  D_{2r+1} \big( F_\ell \Ho^{2,3} \big) \subseteq \Lo_{2r+1}\otimes_{\Q} F_{\ell-1} \Ho^{2,3} \ .$$
\end{lem}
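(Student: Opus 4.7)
The plan is to reduce the problem to a clean combinatorial statement about the sequence $(a_0;a_1,\ldots,a_N;a_{N+1}) = (0;\rho(w);1)$ attached to $\zetam(w)$ for $w\in\{2,3\}^{\times}$. First I would reinterpret the level of $\zetam(w)$ as a pair-counting invariant: since $\rho(w)$ is a concatenation of blocks $10$ and $100$, each block of type $100$ contributes one consecutive pair $(a_i,a_{i+1})=(0,0)$ while every $10$ block and every inter-block junction contributes a $(0,1)$, and the boundary pairs $(a_0,a_1)$ and $(a_N,a_{N+1})$ are both $(0,1)$. Hence the number of ``$00$'' pairs in $(a_0,\ldots,a_{N+1})$ equals $\deg_3 w = \ell$.

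The second observation I would record is that the same block structure forbids any consecutive pair $(a_i,a_{i+1})=(1,1)$: within each block every $1$ is immediately followed by a $0$, and the first letter of every block is preceded either by the last letter $0$ of the previous block or by $a_0=0$.

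Now I would apply the formula $(\ref{mainformula})$ for $D_{2r+1}$ with $n=2r+1$. Each non-zero term is indexed by a position $p$ and contributes $\pi\bigl(\Imot(a_p;a_{p+1},\ldots,a_{p+n};a_{p+n+1})\bigr)\otimes \Imot(a_0;a_1,\ldots,a_p,a_{p+n+1},\ldots,a_N;a_{N+1})$. By property $\Io$, non-vanishing of the left-hand factor forces $a_p\ne a_{p+n+1}$. Comparing the two sequences, the quotient is obtained by deleting the $n+1$ consecutive pairs $(a_p,a_{p+1}),\ldots,(a_{p+n},a_{p+n+1})$ and inserting the single glue pair $(a_p,a_{p+n+1})$. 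Because $a_p\ne a_{p+n+1}$, the glue pair is not ``$00$'', so the level of the quotient equals $\ell - S$, where $S$ is the number of ``$00$'' pairs appearing inside the bracket-interval $(a_p,a_{p+1},\ldots,a_{p+n+1})$ of odd length $n+2 = 2r+3$.

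The only real step is to show $S\geq 1$, and this is where the parity argument does all the work. Suppose $S=0$. Then the bracket-interval contains no ``$00$''; combined with the global absence of ``$11$'' established above, it must strictly alternate between the symbols $0$ and $1$. But a strictly alternating binary string of odd length has the same value at both endpoints, forcing $a_p = a_{p+n+1}$, which contradicts the non-vanishing condition $\Io$. Hence $S\geq 1$, the level of every non-zero quotient in the coaction drops by at least one, and this gives $D_{2r+1}(F_\ell\Ho^{2,3})\subseteq \Lo_{2r+1}\otimes_{\Q} F_{\ell-1}\Ho^{2,3}$. I do not expect any serious obstacle: the previous lemma already guarantees that the quotient integral lies in $\Ho^{2,3}$, so the only additional input needed is the parity/alternation argument above.
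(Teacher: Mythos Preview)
Your proposal is correct and follows essentially the same approach as the paper: the paper's proof also argues that a subsequence of odd length with distinct endpoints must contain at least one ``$00$'' (implicitly using the absence of ``$11$'' and the parity of $2r+3$), so the quotient has strictly smaller level. You have simply made the pair-counting and the alternation/parity argument explicit where the paper leaves it to the reader.
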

\begin{proof} Let $w\in \{2,3\}^\times$ of level $\ell$, so $\rho(w)$ contains exactly $\ell$ sequences $00$.   If a subsequence of odd length of $(0;\rho(w);1)$
begins and ends in the same symbol it is killed by $\Imot$ by $\Io$ and does not contribute to $D_{2r+1}$. Otherwise,  it must necessarily contain at least one $00$, and so the associated quotient sequence is  of strictly smaller level. 
\end{proof}
Thus for all $r,\ell \geq 1$,  we obtain a map
\begin{equation} \label{grFin5pt1}
\gr^F_{\ell} D_{2r+1}: \gr^F_{\ell} \Ho^{2,3} \To \Lo_{2r+1} \otimes_{\Q} \gr^F_{\ell-1} \Ho^{2,3}\ .
\end{equation}

\subsection{The maps $\partial_{N,\ell}$} In order to simplify  notations, let us define $\zeta_{2r+1}\in \Lo_{2r+1}$  by 
 $$\zeta_{2r+1}=\pi(\zetam(2r+1))  \ ,\quad \hbox{ where } r\geq 1\ .$$

\begin{lem} \label{lemdelta}   Let $r, \ell \geq 1$. Then  the map $(\ref{grFin5pt1})$ satisfies 
$$\gr^F_{\ell} D_{2r+1} \big( \gr^F_{\ell} \Ho^{2,3} \big) \subseteq \Q\, \zeta_{2r+1} \otimes_{\Q} \gr^F_{\ell-1}\Ho^{2,3}\ .$$
\end{lem}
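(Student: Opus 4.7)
The plan is to compute $D_{2r+1}\zetam(w)$ for $w\in\{2,3\}^\times$ of level $\ell$ directly from formula $(\ref{mainformula})$ and to show that, modulo $\Lo_{2r+1}\otimes_\Q F_{\ell-2}\Ho^{2,3}$, every nonvanishing left-hand tensor factor is a rational multiple of $\zeta_{2r+1}$. By the proof of Lemma \ref{lemdroplevel}, it suffices to classify those subsequences $(a_p;a_{p+1},\dots,a_{p+2r+1};a_{p+2r+2})$ of $(0;\rho(w);1)$ whose removal destroys exactly one occurrence of the substring $00$. The key structural fact is that $(0;\rho(w);1)$ contains no substring $11$: the map $\rho$ sends each letter of $w$ to a block of the form $10$ or $100$, each beginning with $1$ followed immediately by $0$, and the outer boundary has the form $0,\dots,1$.

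By $\Io$ the boundary symbols $(a_p,a_{p+2r+2})$ must differ. I treat the case $(0,1)$; the case $(1,0)$ reduces to it by applying $\Iiii$ to the subsequence, which reverses the interior while preserving both the no-$11$ property and the count of destroyed $00$'s. In the case $(a_p,a_{p+2r+2})=(0,1)$, the number of destroyed $00$-substrings equals the number of $00$'s strictly inside the interior, plus one more if $a_{p+1}=0$. Requiring this total to equal one yields exactly two shapes. \emph{Shape A}: $a_{p+1}=1$ with exactly one internal $00$; the no-$11$ property forces the interior to decompose as a concatenation of blocks $10$ and $100$ of total length $2r+1$ containing a single $100$, so the interior equals $\rho(2^a 3 2^b)$ for some $a,b\geq 0$ with $a+b+1=r$, and the left-hand factor is $\pi(\zetam(2^a 3 2^b))=c_{2^a 3 2^b}\,\zeta_{2r+1}$ by $(\ref{pizetaw})$. \emph{Shape B}: $a_{p+1}=0$ with no internal $00$; the no-$00$ and no-$11$ constraints force the interior to alternate as $(0,1,0,1,\dots,0)$ of length $2r+1$, so the left-hand factor is $\pi(\zetam_1(2^{\{r\}}))=c_{12^r}\,\zeta_{2r+1}=2(-1)^r\zeta_{2r+1}$ by Lemma \ref{lem010}.

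The main obstacle in this argument is the combinatorial classification itself: one has to rule out all other interior shapes, which relies on the no-$11$ invariant of $(0;\rho(w);1)$ together with $\Io$ to fix both the starting letter of the interior and the way its $0$-blocks combine. Once Shapes A and B are isolated, every contributing left-hand tensor factor lies in $\Q\,\zeta_{2r+1}$, and summing over all such subsequences yields the inclusion $\gr^F_\ell D_{2r+1}(\gr^F_\ell\Ho^{2,3})\subseteq \Q\,\zeta_{2r+1}\otimes_\Q \gr^F_{\ell-1}\Ho^{2,3}$.
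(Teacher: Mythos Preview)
Your proof is correct and follows essentially the same approach as the paper: both classify the nonvanishing subsequences into the four types $\Imot(0;2^{\{\alpha\}}32^{\{\beta\}};1)$, $\Imot(1;\ldots;0)$, $\Imot(\os;\os10\ldots10;1)$, $\Imot(1;01\ldots1\os;\os)$ (your Shapes A and B in the $(0,1)$ and $(1,0)$ cases), and then invoke Corollary~\ref{corlevel1} and the identity~$(\ref{12nshuffle})$ to conclude that each left-hand factor lies in $\Q\,\zeta_{2r+1}$. The paper simply asserts ``One checks that $\Imot(\gamma)$ can be of four remaining types'', whereas you supply the combinatorial verification via the no-$11$ invariant and the count of destroyed $00$'s; this is a welcome elaboration rather than a different method. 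One small remark: in Shape~A you should note that the interior also ends in $0$ (forced by $a_{p+2r+2}=1$ and no-$11$), which together with ``starts with $1$'', ``no $11$'' and ``exactly one $00$'' is what pins down the block decomposition; and your citation of Lemma~\ref{lem010} for Shape~B is stronger than needed, since $(\ref{12nshuffle})$ together with $(\ref{pizetaw})$ already gives $\pi(\zetam_1(2^{\{r\}}))\in\Q\,\zeta_{2r+1}$.
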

\begin{proof} Let $w\in \{2,3\}^{\times}$ of level $\ell$, and let $\Imot(0;\rho(w);1)$ be the corresponding motivic iterated integral.  From the definition of  $D_{2r+1}$, we  have
\begin{equation}\label{formofdelta}
\gr^F_{\ell} D_{2r+1} (\zetam(w)) = \sum_{\gamma}  \pi(\Imot(\gamma))\otimes \zetam(w_\gamma)\ ,
\end{equation}
where  the sum is over all subsequences $\gamma$ of $(0;\rho(w);1)$ of length $2r+1$, 
and $w_\gamma$ is the corresponding  quotient sequence. If $\gamma$ contains more than one subsequence $00$ then $w_{\gamma}$ is of level $< \ell-1$ and so does not contribute.
If $\gamma$ begins and ends in the same symbol, then $\Imot(\gamma)$ is zero.  One checks that  $\Imot(\gamma)$ can be  of four  remaining  types:
\begin{enumerate}
\item  $\Imot(0;10\ldots 1\oo10 \ldots 10;1) = \zetam(2^{\{\alpha\}}32^{\{\beta\}})$
\item  $\Imot(1;01\ldots 01\oo1 \ldots 01;0) = -\zetam(2^{\{\alpha\}}32^{\{\beta\}})$
\item $\Imot(\os;\os1\ldots 10;1)  = \zetam_1(2^{\{r\}})$
\item $ \Imot(1;01\ldots 1\os;\os) = - \zetam_1(2^{\{r\}})$
\end{enumerate}
By corollary \ref{corlevel1} and $(\ref{12nshuffle})$, in every case we have $\pi(\Imot(\gamma)) \in \Q\, \zeta_{2r+1}\subset \Lo_{2r+1}$. 
The coefficient of $\zeta_{2r+1}$ in   $\pi(\Imot(\gamma))$ is either   $\pm c_{2^\alpha32^\beta}$ or $\pm c_{12^r}$.
\end{proof}
Sending $\zeta_{2r+1}$ to $1$ gives  a canonical identification of 1-dimensional vector spaces:
\begin{equation}\label{Qid}
\Q\, \zeta_{2r+1}  \overset{\sim}{\To}  \Q\ .  \end{equation}

\begin{defn}  For all $N, \ell\geq 1$, let $\partial_{N,\ell}$ be the linear  map 
\begin{eqnarray}
\partial_{N,\ell}:  \gr^F_\ell \Ho^{2,3}_N & \To & \bigoplus_{  1< 2 r+1  \leq N  } \gr^F_{\ell-1} \Ho^{2,3}_{N-2r-1} \qquad \Big( = \gr^F_{\ell-1} \Ho^{2,3}_{<N-1}\Big) \nonumber  
\end{eqnarray}
defined by  first applying 
$$\bigoplus_{1 < 2r+1 \leq N}\gr^F_{\ell} D_{2r+1}\Big|_{\gr^F_\ell \Ho^{2,3}_N}
$$
and then sending all $\zeta_{2r+1}$ to $1$ via  $(\ref{Qid})$. Note that since $F_0 \Ho_0^{2,3}= \Ho_0^{2,3}$ the case $2r+1=N$ only plays a role when $\ell=1$.
\end{defn}
Our goal is to show that the  maps  $\partial_{N,\ell}$ are  injective for $\ell \geq 1$. 

\subsection{Matrix representation for $\partial_{N,\ell}$}

 \begin{defn} 
Let $\ell \geq 1$, and let  $B_{N,\ell}$  denote the set of words $w\in \{2,3\}^{\times}$ of weight $N$ and level $\ell$,   in reverse lexicographic order  for the ordering $3<2$. Let $B'_{N,\ell}$ denote the set
of words $w \in \{2,3\}^{\times}$ of all positive weights $<N-1$ (including the empty word) and level $\ell-1$, also in reverse  lexicographic order.
\end{defn}
If we write $N=2m+3\ell$, and if $\ell\geq 1$, then clearly
$$|B_{N,\ell}| = \binom{{m+\ell}}{\ell}  = \sum_{0\leq m'\leq m} \binom{m'+\ell-1}{\ell-1} = |B'_{N,\ell}|.$$
Define a set of words
\begin{equation} \label{Sdef} S= \{w: w\in \{2,3\}^{\times} \hbox{ of level } 1\} \cup \{ 12^k: k\in \N \}\  .\end{equation}
  Let $\ell\geq 1$, and let $w\in B_{N,\ell}$. By $(\ref{formofdelta})$ and the proof of lemma \ref{lemdelta}, 
  \begin{equation} \label{deltaf}
  \partial_{N,\ell} \,\zetam(w) = \sum_{w'\in B'_{N,\ell}} f^w_{w'} \zetam(w')\ ,\end{equation}
  where $f^w_{w'}$ is a $\Z$-linear combination of numbers $c_{w''}\in \Q$, for $w''\in S$.
   \begin{defn} For $\ell\geq 1$, let $M_{N,\ell}$  be the matrix  $(f^w_{w'})_{w\in B_{N,\ell}, w'\in B'_{N,\ell}}$, where $w$ corresponds to the rows, and $w'$ the columns.
 \end{defn}
Note that we have not yet proved that the elements $\zetam(w)$ for $w\in B_{N,\ell}$ or $B'_{N,\ell}$  are linearly independent. The matrix $M_{N,\ell}$ represents the map $\partial_{N,\ell}$  nonetheless.

\subsection{Formal coefficients} It is convenient to consider a formal version of the map $(\ref{deltaf})$ in which the coefficients $f^w_{w'}$ are replaced by symbols. 
Each matrix element $f^w_{w'}$ of $M_{N,\ell}$ is a linear combination of $c_w$, where $w\in S$. 
Therefore, let $\Z^{(S)}$ denote the free $\Z$-module generated by symbols $C_w$, where $w\in S$, and formally define a map 
$$\partial^f_{N,\ell} : \gr^F_{\ell} \Ho_N^{2,3}  \To  \Z^{(S)} \otimes_{\Z} \gr^F_{\ell-1}\Ho^{2,3}_{<N-1} $$
from the formula $(\ref{formofdelta})$ by replacing the coefficient $\pm c_w$ of $\pi(\Imot(\gamma))$ with its formal representative   $\pm C_{w}$.
 Likewise, let $M^f_{N,\ell}$ be the matrix with coefficients in $\Z^{(S)}$ which is the formal  version of  $M_{N,\ell}$. There is a linear map 
\begin{eqnarray} \label{mudef}
\mu:\Z^{(S)} & \To  & \Q   \\
C_w & \mapsto  & c_w\ . \nonumber
\end{eqnarray}
Then by definition  $\partial_{N,\ell} = (\mu\otimes id)  \circ \partial^f_{N,\ell}$ and $M_{N,\ell}$ is obtained from the matrix $M^f_{N,\ell}$ by applying $\mu$ to  all of its entries. 
\subsection{Example}
In weight 10 and level 2, the matrix $M^f_{10,2}$ is as  follows. The words in the first row (resp. column) are the elements of $B'_{10,2}$ (resp. $B_{10,2}$) 
in order.

\begin{table}[h]
\begin{center}
{\small \begin{tabular}{|c|c|c|c|c|c|c|}
\hline
$ $ &223 &   232& 23 & 322 & 32 & 3 \\ 
\hline 
2233 &$C_{3} \! - \!C_{12}$& &$C_{23} \! - \!C_{32} \! - \!C_{122}$ & & & $C_{223} \! - \!C_{ 322}$  \\
2323 & &$C_{3} \! - \!C_{12}$& $C_{23}$ & & & \\
2332 &                   & & $C_{32}$& $C_{23} \! - \!C_{32}$ & & \\
3223 & $C_{12}$  &  & $C_{32}\!-\!C_{23}\!+\!C_{122}$ & $C_3 \! - \!C_{12}$ & $C_{23} \! - \!C_{122}$  & $C_{322}$ \\
3232 &                   & $C_{12}$  &         &                               & $C_{32}$                                           &     $C_{232}$          \\
3322 &                   &                   &         & $C_{12}$              &$C_{32} \! - \!C_{ 23} \! + \!C_{122}$  &     $C_{322}$      \\
\hline 
\end{tabular}}
\end{center}
\end{table}%

All blank entries are zero. Let us check the entry for 
$$\zetam(3,3,2,2)=\Imot(\underset{a_0}{0};\underset{a_1}{1}00100101\underset{a_{10}}{0};\underset{a_{11}}{1})$$
Number the elements of the right-hand sequence $a_0,\ldots, a_{11}$ for reference, as shown.
The non-vanishing terms in  $\gr^F_2 D_3$ correspond to the subsequences commencing at $a_0,a_1,a_3,a_4,a_5$. They all give rise to the same quotient sequence, and we get:
$$ {\small \pi(\Imot(0;100;1)) \otimes \Imot(0;1001010;1) + \pi(\Imot(1;001;0))\otimes \Imot(0;1001010;1)+} $$
$$\pi(\Imot(0;100;1)) \otimes \Imot(0;1001010;1) + \pi(\Imot(1;001;0))\otimes \Imot(0;1001010;1) + $$
$$\pi(\Imot(0;010;1)) \otimes \Imot(0;1001010;1)$$
which gives $(C_3-C_3 +C_3-C_3+C_{12})\, \zetam(3,2,2)=C_{12}\, \zetam(3,2,2)$.  The non-zero terms in $\gr^F_2 D_5$ correspond to subsequences commencing at 
$a_3,a_4,a_5$. 
They give
$$\pi(\Imot(0;10010;1)) \otimes \Imot(0;10010;1) + \pi(\Imot(1;00101;0))\otimes \Imot(0;10010;1) $$
$$+ \pi(\Imot(0;01010;1))\otimes \Imot(0;10010;1)$$
which is $(C_{32}-C_{23}+C_{122} ) \,\zetam(3,2)$. Finally, the only non-zero term  in $\gr^F_2 D_7$ corresponds to the subsequence commencing at 
$a_3$, giving
$$ \small{\pi(\Imot(0;1001010;1)) \otimes \Imot(0;100;1)} $$
 which is $C_{322}\,  \zetam(3)$. The matrix $M_{10,2}$ is obtained by replacing each $C_w$ by $c_w$.

\section{Calculation  of  $\partial_{N,\ell}$}
Let  $I \subseteq \Z^{(S)}$ be the submodule spanned by elements of the form:
\begin{equation}\label{Idef}
C_w-C_{\widetilde{w}} \quad  \hbox{ for }   w\in {\{2,3\} }^{\times} \hbox{ of level } 1 \ , \quad \hbox{ and } \quad  C_{12^k}   \hbox{ for }  k\in \N\ ,
\end{equation}
where $\widetilde{w}$ denotes the reversal of the word $w$.
 We show that   modulo $I$ the maps  $\partial^f_{N,\ell}$ 
act by deconcatenation.
\begin{thm}  \label{thmmodI} Let $w$ be a word in  $\{2,3\}^{\times}$ of weight $N$, level $\ell$.  Then 
$$\partial^f_{N,\ell} \,  \zetam(w) \equiv \sum_{w = uv \atop \deg_3 v=1} C_v \, \zetam(u) \pmod I $$ 
where the sum is over all deconcatenations $w=uv$, where $u,v\in \{2,3\}^{\times}$,  and where $v$ is of level 1, i.e., contains exactly one letter `3'.
\end{thm}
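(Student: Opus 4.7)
The plan is to classify the odd-length subsequences of $(0;\rho(w);1)$ that contribute to $\partial^f_{N,\ell}$ and to pair them up so that, modulo $I$, most contributions cancel, leaving precisely the deconcatenation sum. Recall from Lemma \ref{lemdelta} that each such contributing subsequence falls into one of four types, yielding formal coefficients $+C_{2^\alpha 3 2^\beta}$, $-C_{2^\beta 3 2^\alpha}$, $+C_{12^r}$, or $-C_{12^r}$. Types (3) and (4) vanish modulo $I$ by the definition of $I$ in $(\ref{Idef})$, so only types (1) and (2) require analysis. I parameterise each such subsequence by a triple $(i,\alpha,\beta)$, where $w_i=3$ is the letter of $w$ whose block $\rho(3)=100$ supplies the unique ``$00$'' of the subsequence, and the sub-word $w_{i-\alpha}\cdots w_{i+\beta}$ of $w$ equals $2^\alpha 3\,2^\beta$. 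The type (1) subsequence at $(i,\alpha,\beta)$ brackets exactly the $\rho$-positions of this sub-word, whereas the type (2) subsequence is shifted one position to the right, bracketing from the first character of $\rho(w_{i-\alpha})$ to the second character of $\rho(w_{i+\beta+1})$; consequently type (2) exists only when $i+\beta+1\le k$, where $k$ denotes the length of $w$.

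The core computation is to show that the type (1) and type (2) subsequences at the same $(i,\alpha,\beta)$ produce the \emph{same} quotient sequence. For type (1) this quotient is visibly $\zetam(w_1\cdots w_{i-\alpha-1}w_{i+\beta+1}\cdots w_k)$. For type (2), excising the inner leaves the first character of $\rho(w_{i-\alpha})$ (namely $1$) on the left and the tail of $\rho(w_{i+\beta+1})$ beginning with its second character (either $0$ or $00$) on the right, and these adjacent fragments splice to reconstitute the full block $\rho(w_{i+\beta+1})$, giving the same quotient as in type (1). The formal coefficients at $(i,\alpha,\beta)$ are $+C_v$ and $-C_{\widetilde{v}}$ with $v=2^\alpha 3\,2^\beta$, so by the defining relation $C_v\equiv C_{\widetilde{v}}\pmod I$ they cancel modulo $I$.

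The pairing accounts for every contributing subsequence except those type (1) ones whose type (2) counterpart is not defined, i.e.\ those with $i+\beta+1>k$. This occurs precisely when $i$ equals the position $i_\ell$ of the \emph{last} $3$ in $w$ and $\beta$ equals the full run of trailing $2$'s; then $\alpha$ ranges over the number of $2$'s lying between the previous $3$ (or the start of $w$) and $w_{i_\ell}$. As $\alpha$ varies, the pairs $(v,u)=(2^\alpha 3\,2^{k-i_\ell},\,w_1\cdots w_{i_\ell-\alpha-1})$ range exactly over the deconcatenations $w=uv$ with $\deg_3 v=1$, and the surviving contribution is the claimed sum $\sum_{w=uv,\,\deg_3 v=1} C_v\,\zetam(u)$. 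The principal technical obstacle lies in the splicing argument for type (2): one must check separately the sub-cases $w_{i+\beta+1}\in\{2,3\}$, the extremal case $\alpha=0$ (where $w_{i-\alpha}=w_i=3$), and the type (1) endpoint case $j_2=N+1$ in which the quotient formula degenerates, to verify that the quotient identification holds uniformly across all configurations.
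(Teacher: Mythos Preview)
Your proof is correct and follows essentially the same approach as the paper: classify the odd-length subsequences by type, observe that the $\pm C_{12^r}$ contributions lie in $I$, and pair each ``interior'' $(0;\ldots;1)$ subsequence with its right-shifted $(1;\ldots;0)$ neighbour so that the pair contributes $C_{2^\alpha 3 2^\beta}-C_{2^\beta 3 2^\alpha}\in I$, leaving only the rightmost (unpaired) subsequence for each admissible length. Your explicit parameterisation by $(i,\alpha,\beta)$ and the splicing verification that the two paired subsequences have the \emph{same} quotient sequence is a point the paper leaves implicit, so your write-up actually makes that step cleaner; otherwise the arguments coincide.
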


\begin{proof} A subsequence of length $2r+1$ of the element $\Imot(0;\rho(w);1)$  which corresponds to $\zetam(w)$ can be of  the following types (compare the proof of lemma \ref{lemdelta}):
\begin{enumerate}
\item The subsequence is an alternating sequence of $1$'s and $0$'s,  and does not contain any consecutive $0$'s.  For reasons of parity, its first and last elements are equal. Thus  by $\Io$, this case does not contribute.
\item The subsequence contains one set of consecutive $0$'s and  is  of the form $(0;w';1)$, where the inital $0$ and final $1$ are directly above  the two arrows below: 
\vspace{-0.03in}
$$\cdots 01\underset{\uparrow}{\os}\os10 \cdots1010\underset{\uparrow}{1}0\cdots\ .$$
The word $w'$ consists of the  $r+1$ symbols  strictly inbetween the arrows. This subsequence corresponds to $\Imot(\os;\os10\ldots 10;1)$ which is $\zetam_1(2^{\{r\}})$. The contribution is therefore
$C_{12^r} \in I$.  With similar notations, the case
\vspace{-0.03in}
$$ \cdots0\underset{\uparrow}{1} 0101 \cdots 01\os \underset{\uparrow}{\os}10\cdots$$
corresponds to $\Imot(1;01\ldots 01\os;\os)$ and contributes $-C_{12^r} \in I$, by relation $\Iiii$.
\item The subsequence is of one of two  forms:
\vspace{-0.03in}
$$\underset{a}{0}\underset{b}{1}010\cdots 1\oo \cdots 10\underset{a'}{1}\underset{b'}{0}$$
and contains exactly one set of consecutive $0$'s. The subsequence from $a$ to $a'$ is of the form 
$\Imot(0;1010\ldots 1\oo 10\ldots 10;1)$ and contributes a $\zetam(2^{\{\alpha\}} 32^{\{\beta\}})$. The subsequence from $b$ to $b'$ is of the form
$\Imot(1;010\ldots 1\oo 10\ldots 101;0)$. Using  relation  $\Iiii$,  this contributes a $(-1)^{2r+1}\zetam(2^{\{\beta\}} 32^{\{\alpha\}})$.
Thus the total contribution of these two subsequences is 
$$C_{2^\alpha 32^\beta } - C_{2^\beta 32^\alpha } \in I\ .$$
\item  The subsequence has at least two sets of consecutive $0$'s. This case does not contribute, since if the subsequence has level $\geq 2$,  the quotient sequence
has level $\leq \ell-2$ which maps to zero in $\gr^F_{\ell-1} \Ho^{2,3}$.
\end{enumerate}
Thus every non-trivial subsequence is either of the form  $(2)$, or pairs up with its immediate neighbour to the right to give a contribution of type $(3)$.
The only remaining possibility is the final subsequence of $2r+1$ elements, which has no immediate right neighbour. This gives rise  to a single contribution  of the form
$$C_v \zetam(u)$$
where $w=uv$ and $v$ has weight $2r+1$. If $v$ has level strictly greater than $1$ then by the same reasoning as $(4)$ it does not contribute. This proves the theorem.
\end{proof}

\begin{cor}\label{coruppertri}
 The matrices $M^f_{N,\ell}$  modulo  $I$    are upper-triangular. Every entry which lies on  the diagonal is of the form $C_{32^{r-1}}$, 
 where $r\geq 1$, and  
 every entry lying above it in the same column is of the form $C_{2^a3 2^b}$, where $a+b+1=r$. 
\end{cor}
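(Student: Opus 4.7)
The plan is to combine Theorem \ref{thmmodI} with a direct comparison of the orderings on $B_{N,\ell}$ and $B'_{N,\ell}$. By Theorem \ref{thmmodI}, modulo $I$ the entry $f^w_{w'}$ equals $C_v$ precisely when $w = w' v$ with $\deg_3 v = 1$ (so $v = 2^a 3 2^b$ for some $a,b \geq 0$), and vanishes otherwise.

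First I would introduce a candidate ``diagonal'' bijection $\beta : B'_{N,\ell} \to B_{N,\ell}$ defined by
$$\beta(w') \;=\; w' \cdot 3 \cdot 2^{r-1}, \qquad 2r+1 \;=\; N - \mathrm{wt}(w').$$
Its inverse strips from $w \in B_{N,\ell}$ its last `3' together with the trailing run of `2's, which is well-defined since $\ell \geq 1$. As $|B_{N,\ell}| = |B'_{N,\ell}|$, this identifies a diagonal of $M^f_{N,\ell}$ whose entries, by the preceding paragraph, are the claimed $C_{3 \cdot 2^{r-1}}$.

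The main combinatorial step is that $\beta$ is order-preserving. Let $w', \tilde{w}' \in B'_{N,\ell}$ with $w' < \tilde{w}'$ in the column order. In the same-length case, the first differing position $j$ lies within the common $w'$-portion of $\beta(w')$ and $\beta(\tilde{w}')$, and the inequality propagates directly to the lex row order. In the prefix case — WLOG $\tilde{w}'$ a proper prefix of $w'$, which by the longer-before-shorter convention forces $w' < \tilde{w}'$ on the column side — the crucial observation is that $w'$ and $\tilde{w}'$ have the same level $\ell - 1$, so all letters of $w'$ beyond $\tilde{w}'$ are necessarily `2's. Hence $\beta(w')_{|\tilde{w}'|+1} = 2$ while $\beta(\tilde{w}')_{|\tilde{w}'|+1} = 3$, and $\beta(w') < \beta(\tilde{w}')$ follows in the lex-$(2<3)$ row order.

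Finally, for upper triangularity: any non-diagonal nonzero entry of column $w'$ arises from some $v = 2^a 3 2^b$ with $a \geq 1$, contributing in row $w'' = w' \cdot 2^a 3 2^b$. Since $w''$ and $\beta(w')$ both lie in $B_{N,\ell}$ (hence share the common length $m+\ell$) and agree on their first $|w'|$ letters, while $w''_{|w'|+1} = 2$ and $\beta(w')_{|w'|+1} = 3$, we get $w'' < \beta(w')$, placing $(w'', w')$ strictly above the diagonal with entry $C_{2^a 3 2^b}$ satisfying $a+b+1 = r$. The main obstacle I anticipate is pinning down the correct bijection $\beta$ to match the somewhat unusual prefix convention on $B'_{N,\ell}$; once $\beta$ is in place and the level-matching observation is noted, the combinatorial verification is immediate.
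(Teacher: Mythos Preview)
Your proof is correct and follows the same approach as the paper: both construct the bijection $\beta(w') = w' \cdot 3 2^{r-1}$, observe that it is order-preserving, and read off the diagonal and above-diagonal entries from Theorem~\ref{thmmodI}. The paper simply asserts that $\beta$ preserves the ordering; you supply the verification, and your key observation---that in the prefix case the extra letters of $w'$ beyond $\tilde{w}'$ must all be $2$'s because both words have level $\ell-1$---is exactly the point that makes this work.

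One small wording issue: your dichotomy ``same-length case / prefix case'' is not exhaustive, since $w'$ and $\tilde{w}'$ can have different lengths with neither a prefix of the other (e.g.\ $232$ and $32$). The correct split is ``non-prefix case / prefix case''. Your argument for the first case (the first differing position lies within both words and the inequality propagates to $\beta$) applies verbatim in the non-prefix case regardless of lengths, so nothing is actually missing---just rename the case.
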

 
 \begin{proof}  Let $\ell \geq 1$ and 
consider the map 
\begin{eqnarray} 
B'_{N,\ell-1} & \To &  B_{N,\ell} \\
u & \mapsto & u\, 32^{\{r-1 \}} \nonumber
\end{eqnarray}
where $r\geq 1$ is the unique integer such that the weight of $ u\, 32^{\{r-1\}}$ is equal to $N$.  This map is a bijection and preserves the ordering of both $ B'_{N,\ell-1}$ 
and $ B_{N,\ell}$. It follows from theorem \ref{thmmodI}
that the diagonal entries of $M^f_{N,\ell}$  modulo  $I$ are of the form $C_{32^{r-1}}$. Now let $u \in B'_{N,\ell-1}$.   All non-zero entries      of $M^f_{N,\ell}$  modulo  $I$   in the column indexed by $u$ lie in the rows indexed by $u 2^{\{a\}} 3 2^{\{ b\}}$, where $a+b=r-1$. Since
$$u 2^{\{a\}} 3 2^{\{ b\}} < u \, 32^{\{r-1 \}}\ ,$$
it is  upper-triangular, and the entry in row $u 2^{\{a\}} 3 2^{\{ b\}}$ and column $u$ is $C_{2^a3 2^b}$.
 \end{proof}

\section{Proof of the main theorem}

\subsection{$p$-adic lemma}
We need  the following elementary lemma.

\begin{lem} \label{padiclemma} Let $p$ be a prime, and let $v_p$ denote the $p$-adic valuation. Let $A=(a_{ij})$ be a square $n\times n$ matrix with rational coefficients such that
\begin{eqnarray} \label{padiccond}
&(i)&  v_p(a_{ij}) \geq 1 \quad \hbox{ for all } i>j \\
&(ii)&   v_p(a_{jj})=  \underset{i}{\min}\{v_p(a_{ij})\}\leq 0  \quad \hbox{ for all }  j \ . \nonumber
\end{eqnarray}
Then $A$ is invertible.
\end{lem}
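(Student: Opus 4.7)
The plan is to reduce the problem to a statement about a matrix over $\Z_{(p)}$ with unit determinant modulo $p$, using column rescaling to normalise the diagonal.

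First I would form the diagonal matrix $D=\mathrm{diag}(p^{v_p(a_{11})},\ldots,p^{v_p(a_{nn})})$ and consider $B=AD^{-1}$, i.e.\ the matrix obtained from $A$ by dividing column $j$ by $p^{v_p(a_{jj})}$. Condition $(ii)$ gives $v_p(a_{ij})\geq v_p(a_{jj})$ for all $i,j$, so every entry of $B$ satisfies $v_p(b_{ij})\geq 0$, i.e.\ $B$ has entries in $\Z_{(p)}$. Moreover each diagonal entry $b_{jj}$ has $v_p(b_{jj})=0$, hence is a $p$-adic unit.

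Next I would look at the strictly lower-triangular entries of $B$. For $i>j$, condition $(i)$ gives $v_p(a_{ij})\geq 1$, while $v_p(a_{jj})\leq 0$ by the inequality in $(ii)$; therefore
\[
v_p(b_{ij})=v_p(a_{ij})-v_p(a_{jj})\geq 1.
\]
Thus the reduction $\overline{B}\in \F_p^{n\times n}$ is upper triangular with nonzero diagonal entries, so $\det\overline{B}\in \F_p^\times$, which means $v_p(\det B)=0$ and in particular $\det B\neq 0$.

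Finally, since $\det A=\det B\cdot\det D$ and $\det D=p^{\sum_j v_p(a_{jj})}\neq 0$, we conclude $\det A\neq 0$, so $A$ is invertible. The argument is entirely mechanical; there is no real obstacle beyond correctly tracking the two inequalities in $(i)$ and $(ii)$ and observing that the sign hypothesis $v_p(a_{jj})\leq 0$ in $(ii)$ is exactly what is needed so that column rescaling does not destroy the lower-triangular $p$-divisibility.
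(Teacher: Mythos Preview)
Your proof is correct. It differs from the paper's main argument, which works directly with the Leibniz expansion of $\det A$: the paper shows that every nonzero monomial $m\neq m_0=\prod_i a_{ii}$ has $v_p(m)>v_p(m_0)$, so $v_p(\det A)=v_p(m_0)<\infty$. Your column-rescaling argument is cleaner and avoids bookkeeping over permutations; in fact the paper records exactly your approach as an alternative in the remark immediately following the lemma. Both proofs use the hypothesis $v_p(a_{jj})\leq 0$ in the same essential way---in yours it guarantees that dividing column $j$ by $p^{v_p(a_{jj})}$ does not decrease the valuation of the sub-diagonal entries below $1$, while in the paper's it ensures that the $(n-k)$ positive contributions from below-diagonal factors strictly dominate the missing diagonal terms.
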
 

\begin{proof} We show that the determinant of $A$ is non-zero. The determinant is an alternating sum of products of elements of $A$, taken one from each row and column.
Any such monomial $m\neq 0$ has $k$ terms  on or  above  the diagonal, in columns $j_1,\ldots, j_k$, and $n-k$ terms strictly below the diagonal.
By $(i)$ and $(ii)$ its $p$-adic valuation is 
$$
v_p(m)  \geq   (n-k) + \sum_{r=1}^k v_p(a_{j_rj_r}) $$
If $m$ is not the monomial $m_0$ in which all terms lie on the diagonal, then $k<n$, and 
$$v_p(m) >   \sum_{i=1}^n v_p(a_{ii}) =v_p(m_0)\ . $$
It follows that $v_p(\det(A)) =  v_p(m_0)=\sum_{i=1}^n v_p(a_{ii}) \leq 0 <\infty$, so $\det(A) \neq 0$.
\end{proof}

\begin{rem} Another way to prove this lemma is simply to scale each column of the matrix $A$ by a suitable power of $p$ so that $(i)$ remains true, and so that the diagonal elements have
valuation exactly equal to $0$. The new  matrix has $p$-integral coefficients by $(ii)$, is upper-triangular mod $p$, and  is invertible on the diagonal.
\end{rem}

\begin{thm} \label{thmlevel1} For all $N, \ell\geq 1$, the matrices $M_{N,\ell}$ are invertible.
\end{thm}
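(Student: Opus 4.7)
The plan is to apply the $p$-adic lemma (Lemma \ref{padiclemma}) with $p = 2$ to the matrix $M_{N,\ell}$. The key ingredients are already in place: Corollary \ref{coruppertri} tells us that modulo the submodule $I$, the formal matrix $M^f_{N,\ell}$ is upper-triangular, with diagonal entry $C_{32^{r-1}}$ and above-diagonal entries of the form $C_{2^a 3 2^b}$ (with $a+b+1 = r$) in each column. Specialising via $\mu$, we get corresponding entries $c_{32^{r-1}}$ and $c_{2^a 3 2^b}$ in $M_{N,\ell}$, up to corrections living in $\mu(I)$.

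First I would observe that $\mu(I) \subseteq 2\mathbb{Z}$. Indeed, the generators of $I$ are of two types: $C_w - C_{\widetilde{w}}$, which maps to an element of $2\mathbb{Z}$ by Corollary \ref{cor2adic}(1), and $C_{12^k}$, which maps to $c_{12^k} = 2(-1)^k \in 2\mathbb{Z}$ by Lemma \ref{lem010}. Consequently every entry of $M_{N,\ell}$ strictly below the diagonal has $2$-adic valuation $\geq 1$, which gives condition $(i)$ of the $p$-adic lemma.

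For condition $(ii)$, I would use Corollary \ref{cor2adic}(2), which asserts $v_2(c_{32^{r-1}}) \leq v_2(c_{2^a 3 2^b}) \leq 0$ for all $a+b+1 = r$. Since $v_2(c_{2^a 3 2^b}) \leq 0 < 1$, adding any element of $\mu(I) \subseteq 2\mathbb{Z}$ leaves the $2$-adic valuation unchanged. Therefore, in each column of $M_{N,\ell}$, the diagonal entry and the above-diagonal entries have $v_2$ equal to $v_2(c_{32^{r-1}})$ and $v_2(c_{2^a 3 2^b})$ respectively, and so the diagonal entry achieves the minimum $2$-adic valuation, which is $\leq 0$. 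This is exactly condition $(ii)$.

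The invertibility of $M_{N,\ell}$ then follows immediately from Lemma \ref{padiclemma}. The only delicate point — and the real arithmetic content — is the $2$-adic dominance condition in step two, which is why the precise formulas for the $c_w$ coming from the motivic lift of Zagier's theorem are indispensable. Everything else is essentially bookkeeping: the upper-triangularity is combinatorial, while the passage from the formal to the numerical matrix requires only that the relations $c_w \equiv c_{\widetilde w}$ and $c_{12^k} \equiv 0$ hold modulo $2$, which is the weakest non-trivial $2$-adic information one could extract from Theorem \ref{thmMotZagier}.
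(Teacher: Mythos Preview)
Your proof is correct and follows essentially the same approach as the paper: apply Lemma~\ref{padiclemma} with $p=2$, verify $\mu(I)\subseteq 2\Z$ via Corollary~\ref{cor2adic}(1) and Lemma~\ref{lem010} to get condition~$(i)$, and then use Corollary~\ref{cor2adic}(2) for condition~$(ii)$. One small imprecision: not every above-diagonal entry of $M^f_{N,\ell}\bmod I$ is of the form $C_{2^a32^b}$ --- some are zero modulo $I$, hence specialize to elements of $2\Z$ --- but this only strengthens the inequality needed for condition~$(ii)$, so the argument goes through unchanged.
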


\begin{proof} We show that $M_{N,\ell}$ satisfies the conditions $(\ref{padiccond})$ for $p=2$. The entries of $M_{N,\ell}$ are deduced from those of $M^f_{N,\ell}$ by applying the map $\mu$ of $(\ref{mudef})$. 
It follows from corollary \ref{cor2adic} (1) and lemma \ref{lem010}   that  the generators $(\ref{Idef})$ of $I$ map to even integers under $\mu$ i.e.,
$$\mu(I) \subset 2\, \Z\  .$$
By corollary  $\ref{coruppertri}$, this implies that $M_{N,\ell}$ satisfies $(i)$. Property $(ii)$ follows from  corollary \ref{cor2adic} $(2)$, since the diagonal entries
of $M^f_{N,\ell} \mod I$ are   $C_{32^k}$, $k\in\N$. 
\end{proof}

\subsection{Proof of the main theorem} \label{sectfinalproof}

\begin{thm}  The elements
$\{\zetam(w): w\in \{2,3\}^{\times}\}$
are linearly independent.
\end{thm}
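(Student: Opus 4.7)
My plan is to prove linear independence by induction on the weight $N$, with the invertibility of the matrices $M_{N,\ell}$ from Theorem~\ref{thmlevel1} serving as the main engine. Since $\Ho^{2,3}$ is graded by weight, it is enough to show that for each $N$ the family $\{\zetam(w):w\in\{2,3\}^{\times},\ \mathrm{weight}(w)=N\}$ is $\Q$-linearly independent in $\Ho^{2,3}_N$.

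An elementary observation that I would invoke throughout is the following: if all $\zetam(w)$ of weight $M$ are independent in $\Ho^{2,3}_M$, then for each $k\geq 0$ the subfamily of those $w$ with $\deg_3 w=k$ projects to a linearly independent set in $\gr^F_k\Ho^{2,3}_M$, since any relation in the quotient would lift to a $\Q$-linear relation mixing level-$k$ words with words of strictly smaller level in $\Ho^{2,3}_M$, contradicting the assumed independence.

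The base cases $N\leq 3$ are trivial. For the inductive step, assume linear independence in all weights strictly less than $N$. Fix $\ell\geq 1$; I aim to show that $\{\zetam(w): w\in B_{N,\ell}\}$ is linearly independent in $\gr^F_\ell\Ho^{2,3}_N$. Starting from a hypothetical relation $\sum_{w\in B_{N,\ell}} a_w\,\zetam(w)\equiv 0\pmod{F_{\ell-1}}$ and applying $\partial_{N,\ell}$, formula~(\ref{deltaf}) yields
\[\sum_{w'\in B'_{N,\ell}}\Bigl(\sum_{w\in B_{N,\ell}} a_w\,f^w_{w'}\Bigr)\,\zetam(w')=0\qquad\hbox{in }\gr^F_{\ell-1}\Ho^{2,3}_{<N-1}.\]
Every $w'\in B'_{N,\ell}$ has weight strictly less than $N-1$, so the induction hypothesis combined with the preliminary observation implies that the $\zetam(w')$ are linearly independent in the target. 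Hence the column vector $(a_w)_{w\in B_{N,\ell}}$ lies in the kernel of $M_{N,\ell}$, and Theorem~\ref{thmlevel1} then forces each $a_w=0$. The case $\ell=0$ is handled by Lemma~\ref{lemlev0}, since the unique level-$0$ word of weight $N$ (when $N$ is even) is $2^{\{N/2\}}$, whose motivic value is a nonzero rational multiple of $\zetam(2)^{N/2}$.

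Assembling these level-wise independence statements along the filtration $F_\bullet$ then delivers linear independence of all $\zetam(w)$ of weight $N$, completing the induction. The genuine mathematical content has already been absorbed into Theorem~\ref{thmlevel1}; the only step that requires any care is the bookkeeping that relates linear independence in $\Ho^{2,3}$ to linear independence in each graded piece $\gr^F_k$, but this is routine once the level filtration is in hand.
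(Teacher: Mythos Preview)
Your proof is correct and relies on the same engine as the paper (the invertibility of $M_{N,\ell}$ from Theorem~\ref{thmlevel1}); the only organizational difference is that you induct on the weight $N$ whereas the paper inducts on the level $\ell$. Your preliminary observation makes the passage between independence in $\Ho^{2,3}$ and independence in each $\gr^F_k$ more explicit than the paper's write-up, which treats relations within a fixed level and relations across levels in two separate sentences, but the substance of the two arguments is identical.
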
 
\begin{proof}  By induction on the level. The elements of level zero are of the form 
$\zetam(2^{\{n\}})$ for $n\geq 0$, which by lemma \ref{lemlev0} are linearly independent. Now suppose that 
$$\{\zetam(w): w\in \{2,3\}^\times \ , \quad w \hbox{ of level } = \ell\}$$
are independent. Since the  weight is a grading on the motivic multiple zeta values,  we can assume that any non-trivial linear relation between $\zetam(w)$, for $w\in \{2,3\}^\times$ of level $\ell+1$ is of  homogeneous weight $N$.  By theorem \ref{thmlevel1}, the map $\partial_{N,\ell+1}$ is injective and therefore  gives  a non-trivial  relation  of  strictly smaller  level, a contradiction. Thus $\{\zetam(w): w\in \{2,3\}^\times \hbox{ of level }\ell+1\}$ are linearly independent, which completes the induction step.   The fact that
the operator $D_{\leq N}$  strictly decreases the level 
 (lemma \ref{lemdroplevel}), and that its level-graded pieces    $\partial_{N,\ell}$ are injective   implies  that there can be no non-trivial relations between elements $\zetam(w)$ of different levels.
\end{proof}
It follows that 
\begin{equation} \label{lowerbound} \dim \Ho_N^{2,3}  =  \#\{w\in \{2,3\}^\times  \hbox{ of weight } N\} =  d_N\ ,
\end{equation}
where $d_N$ is the dimension of $\U_N$ $(\ref{enumeration})$. The inclusions
$$\Ho^{2,3}\subseteq \Ho \subseteq \Homt$$
are therefore all equalities, since their dimensions in graded weight $N$ are  equal.
The equality $\Ho^{2,3}=\Ho$ implies the following corollary:
\begin{cor} Every motivic multiple zeta value $\zetam(a_1,\ldots, a_n)$  is a $\Q$-linear combination of $\zetam(w)$,  for $w\in\{2,3\}^\times$.
\end{cor}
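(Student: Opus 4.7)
The plan is to deduce the corollary purely from the equality $\Ho^{2,3} = \Ho$, which is already flagged in the discussion preceding the statement. By definition, every motivic multiple zeta value $\zetam(a_1,\ldots,a_n)$ lies in $\Ho$; if we can show that $\Ho^{2,3}$ exhausts $\Ho$, the conclusion is immediate.

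First I would assemble a lower bound for $\dim \Ho^{2,3}_N$. By the preceding theorem (linear independence of $\{\zetam(w) : w \in \{2,3\}^\times\}$), the generators $(\ref{Ho23gen})$ of $\Ho^{2,3}_N$ are linearly independent, so
\[
\dim_{\Q} \Ho^{2,3}_N \;\geq\; \#\{w \in \{2,3\}^\times : \text{weight}(w) = N\}.
\]
The number of such words satisfies the recursion $a_N = a_{N-2} + a_{N-3}$ with $a_0 = 1$, $a_1 = 0$, $a_2 = 1$, according to whether $w$ begins with $2$ or with $3$; hence its generating series is $1/(1-t^2-t^3)$. By the lemma on $\dim \U_k$, this is precisely $d_N$, so $\dim \Ho^{2,3}_N \geq d_N$.

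For the matching upper bound I would invoke the injective morphism of graded algebra comodules $\Ho \hookrightarrow \Homt$ of $(\ref{HotoHomt})$, together with the (non-canonical) isomorphism $\Homt \cong \U$ of $(\ref{HisomU})$. Since $\dim \U_N = d_N$, this gives $\dim \Ho_N \leq d_N$. Combining with the previous paragraph and the obvious inclusions $\Ho^{2,3}_N \subseteq \Ho_N \subseteq \Homt_N$, all three have dimension $d_N$ in each weight, so they coincide; in particular $\Ho^{2,3} = \Ho$.

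Finally, since $\zetam(a_1,\ldots,a_n) \in \Ho = \Ho^{2,3}$ and $\Ho^{2,3}$ is by definition the $\Q$-span of $\{\zetam(w) : w \in \{2,3\}^\times\}$, the corollary follows. There is no real obstacle here: the entire content of the statement is packaged in the preceding linear independence theorem and the dimension count for $\U_N$, and the argument is essentially a one-line application of ``injection between finite-dimensional spaces of equal dimension is an isomorphism'' in each graded piece.
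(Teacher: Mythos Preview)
Your proposal is correct and follows essentially the same route as the paper: use the linear independence theorem to get $\dim \Ho^{2,3}_N = d_N$, compare with $\dim \Homt_N = d_N$ via the inclusions $\Ho^{2,3}\subseteq \Ho \subseteq \Homt$, conclude $\Ho^{2,3}=\Ho$, and read off the corollary. One cosmetic point: since $\Ho^{2,3}_N$ is by definition the span of the $\zetam(w)$ with $w\in\{2,3\}^\times$ of weight $N$, linear independence gives $\dim \Ho^{2,3}_N = \#\{w\}$ on the nose, not merely $\geq$; but this does not affect the argument.
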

 
This  implies conjecture \ref{conj2}. The equality $\Ho=\Homt$ implies  conjecture \ref{conj1}.

\section{Remarks}
By adapting the above argument, one could show that the elements
$$\{\zetam(w): \hbox{ where } w \hbox{ is a Lyndon word in } \{2,3\}^\times\}$$
are algebraically independent in $\Ho$. That their periods should be algebraically independent was conjectured in  \cite{DataMine}.  It might also be interesting to  try to use the matrices $M_{N,\ell}$ to compute the multiplication law on the basis $(\ref{Hbasis})$.

The geometric meaning of theorem \ref{thmMotZagier} is not  clear.   The term $(1-2^{-2r}) \zeta(2r+1)$ which comes from  $B^{a,b}_r$  
can  be interpreted  as an alternating sum  and suggests that the formula should be viewed   as an identity between motivic iterated integrals on $\Pro^1\backslash \{0,\pm 1,\infty\}$.  It would be interesting to find a direct motivic proof of theorem \ref{thmMotZagier}  along these lines. Apart from the final step of \S\ref{sectfinalproof}, 
the only other place where we use the structure of the category $\MT(\Z)$ is in  theorem \ref{thmMotZagier}. A proof of theorem  \ref{thmMotZagier} using standard relations would
give a purely combinatorial proof that $\dim \Ho_N \geq d_N$.

The  argument in this paper could  be dualized to take place in Ihara's  algebra, using his pre-Lie  operator  (\cite{DG}, $(5.13.5)$) instead of Goncharov's formula  $(\ref{defGcoproduct})$. This  sheds some light on  the appearance of the deconcatenation coproduct in theorem \ref{thmmodI}.

Finally, we should point out that the existence of a different explicit basis for multiple zeta values was  apparently announced many years ago by J.   Ecalle.

\section{Acknowledgements}
Very many thanks to Pierre Cartier and Pierre Deligne for a thorough reading of this text and many  corrections and comments.  
 This work was  supported by  European Research Council grant no.  257638: `Periods in algebraic geometry and physics'. 

\bibliographystyle{plain}
\bibliography{main}

\end{document}